%
\documentclass[]{interact}
\usepackage{mathrsfs} 
%

%
\DeclareMathOperator{\diag}{diag}
\DeclareMathOperator{\diam}{diam}
\DeclareMathOperator*{\esup}{ess\,sup}
\DeclareMathOperator{\id}{id}
\DeclareMathOperator{\spann}{span}	
\DeclareMathOperator{\lip}{lip}		
\DeclareMathOperator{\sgn}{sgn}
\newcommand{\C}{{\mathbb C}}
\newcommand{\I}{{\mathbb I}}
\newcommand{\K}{{\mathbb K}}
\newcommand{\N}{{\mathbb N}}
\newcommand{\R}{{\mathbb R}}
\newcommand{\Z}{{\mathbb Z}}
\newcommand{\fA}{{\mathfrak A}}
\newcommand{\fT}{{\mathfrak T}}

\newcommand{\cO}{{\mathscr O}}
\newcommand{\cV}{{\mathscr V}}
\newcommand{\cW}{{\mathscr W}}
\newcommand{\sF}{{\mathscr F}}
\newcommand{\sG}{{\mathscr G}}
\newcommand{\sH}{{\mathscr H}}
\newcommand{\sK}{{\mathscr K}}
\newcommand{\sL}{{\mathscr L}}
\newcommand{\sN}{{\mathscr N}}

\renewcommand{\d}{\,{\mathrm d}}
\newcommand{\eps}{\varepsilon}
\newcommand{\tm}{\times}
\newcommand{\vphi}{\varphi}
%
%
%
\newcommand{\intoo}[1]{\left(#1\right)}				
\newcommand{\intcc}[1]{\left[#1\right]}				
\newcommand{\set}[1]{\left\{#1\right\}}				
\newcommand{\abs}[1]{\left|#1\right|}				
\newcommand{\norm} [1]{\left\|#1\right\|}			
\newcommand{\iprod} [1]{\left\langle#1\right\rangle}
\newcommand{\fall}{\quad\text{for all }}

\newcommand{\cref}[1]{Cor.~\ref{#1}}

\newcommand{\fref}[1]{Fig.~\ref{#1}}

\newcommand{\pref}[1]{Prop.~\ref{#1}}
\newcommand{\tref}[1]{Thm.~\ref{#1}}
\newcommand{\sref}[1]{Sect.~\ref{#1}}
\newtheorem*{hyp}{Hypothesis}
\theoremstyle{plain}
\newtheorem{theorem}{Theorem}[section]

\newtheorem{corollary}[theorem]{Corollary}
\newtheorem{proposition}[theorem]{Proposition}

\theoremstyle{definition}

\newtheorem{example}[theorem]{Example}

\theoremstyle{remark}
\newtheorem{remark}[theorem]{Remark}
\allowdisplaybreaks
\begin{document}

\articletype{ARTICLE TEMPLATE}

\title{Numerical dynamics of integrodifference equations:\\ 
Hierarchies of invariant bundles in $L^p(\Omega)$}

\author{
\name{Christian P\"otzsche\thanks{Email: christian.poetzsche@aau.at}}
\affil{Institut f\"ur Mathematik, Universit\"at Klagenfurt, 9020 Klagenfurt, Austria}
}
\maketitle
\begin{abstract}
	We study how the ''full hierarchy" of invariant manifolds for nonautonomous integrodifference equations on the Banach spaces of $p$-integrable functions behaves under spatial discretization of Galerkin type. These manifolds include the classical stable, center-stable, center, center-unstable and unstable ones, and can be represented as graphs of $C^m$-functions. For kernels with a smoothing property, our main result establishes closeness of these graphs in the $C^{m-1}$-topology under numerical discretizations preserving the convergence order of the method. It is formulated in a quantitative fashion and technically results from a general perturbation theorem on the existence of invariant bundles (i.e.\ nonautonomous invariant manifolds).
\end{abstract}
\begin{keywords}
Integrodifference equation, numerical dynamics, dichotomy spectrum, nonautonomous invariant manifolds, Galerkin method
\end{keywords}
\begin{amscode}
65P99, 37J15, 37L25, 47H30, 45M10
\end{amscode}
\section{Introduction}
Over the last years, integrodifference equations (abbreviated IDEs) 
\begin{equation}
	\tag{$I_0$}
	u_{t+1}=\int_\Omega k_t(\cdot,y)g_t(y,u_t(y))\d y
\end{equation}
became popular and widely used models in theoretical ecology for the temporal evolution and spatial dispersal of populations having non-overlapping generations \cite{lutscher:19}. They constitute an interesting class of infinite-dimensional dynamical systems and can be seen as a discrete-time counterpart to reaction-diffusion equations, with whom they share various dynamical features. 
IDEs also arise in other fields, such as time-$1$-maps of evolutionary differential equations or as iterative schemes to solve nonlinear boundary value problems via an equivalent fixed point formulation (cf.~\cite[pp.~168--169]{martin:76}). 
Due to their origin in ecology, a periodic $t$-dependence in \eqref{ide0} is well-motivated, but recently also applications requiring more general temporal forcing became relevant \cite{jacobsen:jin:lewis:15}. 

The long term behavior of IDEs is often illustrated using numerical simulations, which require to discretize their state space $X$. For this reason it is a well-motivated question to relate the dynamics of the original and of the numerically approximated problem. This is a key issue in Numerical Dynamics \cite{stuart:humphries:98}. Among the various aspects of this field, we tackle invariant manifolds. These sets provide the skeleton of the state space for a dynamical system, since stable manifolds $W_s$ might serve as boundary between different domains of attraction, unstable manifolds $W_u$ constitute global attractors and center manifolds $W_{cs},W_c,W_{cu}$ capture the essential dynamics near non-hyperbolic solutions. These sets fulfill the \emph{classical hierarchy} (a notion from~\cite{aulbach:87})
\begin{equation}
	\begin{array}{ccccc}
		W_s & \subset & W_{cs} & \subset & X\\
		&&\cup&&\cup\\
		&&W_c & \subset & W_{cu}\\
		&&&&\cup\\
		&&&&W_u.
	\end{array}
	\label{chier}
\end{equation}
In continuous time, early contributions for autonomous ODEs in $\R^d$ were \cite{beyn:87} (stable and unstable manifolds) and \cite{beyn:lorenz:87} (center manifolds). Generalizations to infinite-di\-men\-sio\-nal problems appeared in \cite{alouges:debussche:91} (evolutionary PDEs), \cite{farkas:01} (delay equations) and \cite{jones:stuart:95} (full discretizations of evolutionary PDEs). The effect of time-discretizations to invariant manifolds for nonautonomous ODEs in Banach spaces was studied in \cite{keller:poetzsche:06}. 

Throughout the literature the state space of IDEs typically consists of continuous or integrable functions \cite{lutscher:19}. It is all the more important to understand the behavior of their invariant manifolds in the classical hierarchy \eqref{chier} when such equations are spatially discretized. Accordingly the paper at hand further explores the numerical dynamics of nonautonomous equations. For related work we refer to \cite{poetzsche:20a} on the classical situation of local stable and unstable manifolds for hyperbolic periodic solutions to Urysohn IDEs in the space of (H\"older) continuous functions over a compact habitat. The present approach complements and extends \cite{poetzsche:20a} in various aspects: First, we tackle Hammerstein IDEs \eqref{ide0} in $L^p$-spaces rather than over the continuous functions. Second, we address the complete hierarchy of invariant manifolds containing also strongly stable, the three types of center, as well as strongly unstable manifolds and establish convergence including their derivatives preserving the order of the discretization method. Third, dealing with general nonautonomous equations requires various alternative tools such as the dichotomy spectrum \cite{russ:16} (instead of the Floquet spectrum) and a flexible perturbation theorem for invariant bundles replacing the Lipschitz inverse function theorem used in \cite{poetzsche:20a}, whose applicability is restricted to the hyperbolic situation. We finally point out that such extended hierarchies of invariant manifolds date back to \cite{aulbach:87} in the framework of autonomous ODEs. 

The crucial assumption in our analysis are ambient smoothing properties of the kernels $k_t$ paving the way to corresponding error estimates. In order to minimize technicalities, we deal with globally defined semilinear IDEs having the trivial solution and therefore obtain global results. By means of well-known translation and cut-off methods more general and real-life problems under local assumptions can be adapted to the present setting and then yield results valid in the vicinity of a given reference solution (cf.\ \sref{seccutoff} or \cite[pp.~256ff, Sect.~4.6]{poetzsche:10}). 

This paper is structured as follows: In \sref{sec2} we provide the basic assumptions on nonautonomous Hammerstein IDEs such that they are well-defined on $L^p$-spaces and generate a completely continuous process. In order to circumvent the pathological smoothness properties of Nemytskii operators on $L^p$-spaces this requires them to satisfy suitable mapping properties into a larger $L^q$-space, $q<p$, being balanced by Hille-Tamarkin conditions on the kernel (cf.\ \sref{sec21}). In short, dealing with an IDE \eqref{ide0} of class $C^m$ requires exponents $p>m$. The starting point of our actual analysis are nonautonomous linear IDEs. Based on exponential dichotomies and the dichotomy spectrum we present the 'linear algebra' necessary for our analysis, where spectral intervals and bundles extend eigenvalue moduli respectively generalized eigenspaces (cf.~\cite{russ:16}) to a time-variant setting. The following \sref{sec3} contains our main results (Thms.~\ref{thmpifb} and \ref{thmcenter}) describing how nonautonomous invariant manifolds (bundles) and their derivatives up to order $m-1$ behave under discretization using projection methods. Concrete examples of projection methods and their applicability in Thms.~\ref{thmpifb} and \ref{thmcenter} are discussed in \sref{sec4}. Finally, for the reader's convenience, we conclude the paper with two appendices. App.~\ref{appA} contains the central perturbation result from \cite{poetzsche:03} in a formulation suitable for those not familiar with the calculus on measure chains. App.~\ref{appB} quotes a well-definedness criterion for linear Fredholm integral operators and (due to the lack of a suitable reference) provides well-definedness and smoothness properties for Nemytskii operators between Lebesgue spaces. 
\paragraph*{Notation}
Let $\K$ be one of the fields $\R$ or $\C$. A discrete interval $\I$ is the intersection of a real interval with the integers $\Z$ and $\I':=\set{t\in\I:\,t+1\in\I}$. Given $\ell\in\N$ and normed spaces $X,Y$, we write $L_\ell(X,Y)$ for the space of bounded $\ell$-linear operators $T:X^\ell\to Y$ and moreover $L_0(X,Y):=Y$, as well as $L(X,Y):=L_1(X,Y)$. It is handy to abbreviate $L_\ell(X):=L_\ell(X,X)$ and $L(X):=L(X,X)$. We write $N(S):=S^{-1}(0)\subseteq X$ for the kernel, $R(S):=SX\subseteq Y$ for the range of $S\in L(X,Y)$. Moreover, $\id_X$ is the identity map on $X$, $\sigma(T)$ the spectrum of $T\in L(X)$ and $\abs{\cdot}$ denotes norms on finite-dimensional spaces. Finally, we write $\lip f$ for the (smallest) Lipschitz constant of a mapping $f$. 

Throughout, let $\Omega\subset\R^\kappa$ be an open, bounded (and hence Lebesgue measurable) set and $\diam\Omega:=\sup_{x,y\in\Omega}\abs{x-y}$ is its diameter. For $p\in[1,\infty)$ we introduce the space of $\K^d$-valued $p$-integrable functions
\begin{align*}
	L_d^p(\Omega)
	&:=
	\biggl\{u:\Omega\to\K^d
	\biggl|u\text{ is Lebesgue measurable with }\int_\Omega\abs{u(y)}^p\d y<\infty
	\biggr\},\\
	L_d^\infty(\Omega)
	&:=
	\biggl\{u:\Omega\to\K^d
	\biggl|u\text{ is Lebesgue measurable with }\esup_{x\in\Omega}\abs{u(y)}<\infty
	\biggr\}
\end{align*}
and equip it with $\norm{u}_p:=\intoo{\sum_{j=1}^d\int_\Omega|u_j(y)|^p\d y}^{1/p}$ resp.\ $\norm{u}_\infty:=\esup_{x\in\Omega}\abs{u(x)}$ as canonical norms. This yields a strictly decreasing scale $(L_d^p(\Omega))_{p\geq 1}$ of Banach spaces. In particular, $L_d^2(\Omega)$ is a Hilbert space with inner product $\iprod{u,v}=\sum_{j=1}^d\int_\Omega u_j(y)\overline{v_j(y)}\d y$. Finally, we write $L^p(\Omega):=L_1^p(\Omega)$. 
%
%
%
%
%
\section{Nonautonomous difference equations}
\label{sec2}
Let $\I$ be an unbounded discrete interval. 
\subsection{Hammerstein integrodifference equations}
\label{sec21}
We investigate nonautonomous integrodifference equations
\begin{align}
	\tag{$I_0$}
	u_{t+1}&=\sF_t(u_t),&
	\sF_t(u)&:=\int_\Omega k_t(\cdot,y)g_t(y,u(y))\d y
	\fall t\in\I', 
	\label{ide0}
\end{align}
whose right-hand sides are Hammerstein integral operators satisfying the assumptions: 

The \emph{kernels} $k_t:\Omega\tm\Omega\to\K^{d\tm n}$, $t\in\I'$, in \eqref{ide0} fulfill \emph{Hille-Tamarkin conditions} with exponents $p,q\in(1,\infty)$: 
\begin{enumerate}
	\item[\textbf{$(H_{q,p})$}]\quad $k_t$ is Lebesgue measurable and with $q'>1$ determined by $\tfrac{1}{q}+\tfrac{1}{q'}=1$, we assume 
	$
		\int_\Omega\biggl(\int_\Omega\abs{k_t(x,y)}^{q'}\d y\biggr)^{p/q'}\d x
		<
		\infty\fall t\in\I'.
	$
\end{enumerate}
Then \pref{propmak} guarantees that the linear integral operators
\begin{align}
	\sK_t&\in L(L_n^q(\Omega),L_d^p(\Omega)),&
	\sK_tv&:=\int_\Omega k_t(\cdot,y)v(y)\d y\fall t\in\I'
	\label{defK}
\end{align}
are well-defined and compact. 

Concerning the \emph{growth functions} $g_t:\Omega\tm\K^d\to\K^n$, $t\in\I'$, in \eqref{ide0}, for given $m\in\N_0$ and exponents $p,q\in(1,\infty)$ we assume \emph{Carath{\'e}odory conditions} for all $0\leq\ell\leq m$:
\begin{enumerate}
	\item[\textbf{$(C_{p,q}^m)$}]\quad $D_2^\ell g_t(x,\cdot):\K^d\to L_\ell(\K^d,\K^n)$ exists and is continuous for a.a.\ $x\in\Omega$, while $D_2^\ell g_t(\cdot,z):\Omega\to L_\ell(\K^d,\K^n)$ is measurable on $\Omega$ and with $mq<p$ there exist functions $c\in L^{\tfrac{pq}{p-mq}}(\Omega)$ and reals $c_0,\ldots,c_m\geq 0$ so that for a.a.~$x\in\Omega$ one has
	\begin{equation*}
		\abs{D_2^\ell g_t(x,z)}_{L_\ell(\K^d,\K^n)}
		\leq
		c(x)+c_\ell\abs{z}^{\tfrac{p-\ell q}{q}}
		\fall t\in\I',\,z\in\K^d.
	\end{equation*}
\end{enumerate}
Given this, \pref{lemmag} and \ref{lemdiff} ensure that the Nemytskii operators
\begin{align}
	\sG_t:L_d^p(\Omega)&\to L_n^q(\Omega),&
	[\sG_t(u)](x)&:=g_t(x,u(x))\fall t\in\I',\,x\in\Omega
	\label{defG}
\end{align}
are well-defined and in case $mq<p$ also $m$-times continuously differentiable. In a nutshell, the right-hand sides of \eqref{ide0} satisfy
\begin{proposition}[properties of $\sF_t$]\label{prophammer}
	Let $p,q\in(1,\infty)$ and $m\in\N$. If Hypotheses~$(H_{q,p})$ and $(C_{p,q}^m)$ hold with $mq<p$, then the right-hand sides $\sF_t:=\sK_t\sG_t:L_d^p(\Omega)\to L_d^p(\Omega)$ of \eqref{ide0} are well-defined, completely continuous and of class $C^m$ with derivatives $D^\ell(\sK_t\sG_t)(u)=\sK_tD^\ell\sG_t(u)$ for all $t\in\I'$, $0\leq\ell\leq m$ and $u\in L_d^p(\Omega)$. 
\end{proposition}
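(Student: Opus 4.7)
The plan is to view $\sF_t$ as the composition $\sK_t \circ \sG_t$ and transfer the properties of each factor, established in the propositions cited from the appendices, via standard chain-rule and compactness arguments. Concretely, Hypothesis $(H_{q,p})$ together with Proposition~\ref{propmak} yields that $\sK_t\in L(L_n^q(\Omega),L_d^p(\Omega))$ is a well-defined bounded linear (hence $C^\infty$) and compact operator, while Hypothesis $(C_{p,q}^m)$ with $mq<p$ and Propositions~\ref{lemmag}, \ref{lemdiff} give a well-defined Nemytskii operator $\sG_t:L_d^p(\Omega)\to L_n^q(\Omega)$ of class $C^m$. Since the target of $\sG_t$ coincides with the domain of $\sK_t$, the composition $\sF_t=\sK_t\sG_t:L_d^p(\Omega)\to L_d^p(\Omega)$ is immediately well-defined.

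Next, I would derive the smoothness and the derivative formula by applying the chain rule. Because $\sK_t$ is linear and bounded, its Fréchet derivative at every point is $\sK_t$ itself and all higher derivatives vanish. Hence the chain rule gives $D\sF_t(u)=\sK_t\circ D\sG_t(u)$, and an easy induction on $\ell$, in which $\sK_t$ is pulled out of each differentiation step, yields
\begin{equation*}
	D^\ell\sF_t(u)=\sK_t D^\ell\sG_t(u)\fall 0\leq\ell\leq m,\ u\in L_d^p(\Omega).
\end{equation*}
Continuity of $u\mapsto D^\ell\sF_t(u)$ as a map into $L_\ell(L_d^p(\Omega),L_d^p(\Omega))$ then follows from the continuity of $D^\ell\sG_t$ and the boundedness of $\sK_t$, establishing the $C^m$-property of $\sF_t$.

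Finally, for complete continuity I would use the compactness of $\sK_t$. Any bounded set $B\subset L_d^p(\Omega)$ is sent by $\sG_t$ to a bounded subset of $L_n^q(\Omega)$: the growth bound in $(C_{p,q}^m)$ at $\ell=0$ combined with $c\in L^{pq/(p-mq)}(\Omega)\hookrightarrow L^q(\Omega)$ implies a uniform $L^q$-estimate of $\sG_t(u)$ in terms of $\norm{u}_p$ (this is what Proposition~\ref{lemmag} actually delivers). Applying the compact operator $\sK_t$ then makes $\sF_t(B)$ relatively compact in $L_d^p(\Omega)$. Since $\sF_t$ is continuous by its $C^m$-regularity, this gives complete continuity.

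I do not expect any genuine obstacle here; the statement is essentially a bookkeeping corollary of the appendix results together with the fact that a linear operator passes through the Fréchet derivative. The only point requiring mild care is verifying that $\sG_t$ maps bounded sets to bounded sets, which is routine via the polynomial growth condition but conceptually the least automatic step, since on general Banach spaces continuity alone does not suffice.
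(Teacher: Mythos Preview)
Your proposal is correct and follows essentially the same route as the paper: well-definedness and compactness of $\sK_t$ via \pref{propmak}, well-definedness, boundedness and $C^m$-smoothness of $\sG_t$ via Props.~\ref{lemmag} and \ref{lemdiff}, and the Chain Rule for the derivative formula. The only cosmetic difference is that the paper invokes an external reference (\cite[pp.~25--26, Thm.~2.1]{precup:02}) for the step ``compact $\circ$ bounded-and-continuous $=$ completely continuous'', whereas you argue this directly from the boundedness of $\sG_t$ and the compactness of $\sK_t$.
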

\begin{proof}
	Let $t\in\I'$. The Fredholm operators \eqref{defK} are well-defined and compact due to \pref{propmak}, while the Nemytskii operators \eqref{defG} are well-defined, bounded and continuous by \pref{lemmag}. Then \cite[pp.~25--26, Thm.~2.1]{precup:02} implies that $\sK_t\sG_t$ is completely continuous. Finally, \pref{lemdiff} and the Chain Rule applied to the $\sK_t\sG_t$ yield the claimed smoothness assertion. 
\end{proof}

Under the assumptions of \pref{prophammer} an IDE \eqref{ide0} is well-posed. This means that for arbitrary initial times $\tau\in\I$ and initial states $u_\tau\in L_d^p(\Omega)$, there are unique \emph{forward solutions}, i.e.\ sequences $(\phi_t)_{\tau\leq t}$ in $L_d^p(\Omega)$ satisfying the \emph{solution identity} $\phi_{t+1}=\sF_t(\phi_t)$ for $\tau\leq t$, $t\in\I'$. A \emph{backward solution} fulfills the solution identity for $t<\tau$ and \emph{entire solutions} $(\phi_t)_{t\in\I}$ satisfy $\phi_{t+1}\equiv\sF_t(\phi_t)$ on $\I'$. The \emph{general solution} to \eqref{ide0} is given by $\vphi^0:\set{(t,\tau,u_\tau)\in\I^2\tm L_d^p(\Omega):\,\tau\leq t}\to L_d^p(\Omega)$ via the compositions
\begin{align*}
	\vphi^0(t;\tau,u_\tau)
	&:=
	\begin{cases}
		\sF_{t-1}\circ\ldots\circ\sF_\tau(u_\tau),&\tau<t,\\
		u_\tau,&t=\tau.
	\end{cases}
\end{align*}

A subset $\cW\subseteq\I\tm L_d^p(\Omega)$ is called a \emph{nonautonomous set} and $\cW(t):=\{u\in L_d^p(\Omega):\,(t,u)\in\cW\}$, $t\in\I$, its $t$-\emph{fiber}. We say $\cW$ is \emph{forward invariant} resp.\ \emph{invariant}, provided 
$$
	\sF_t(\cW(t))\subseteq\cW(t+1)\quad\text{resp.}\quad
	\sF_t(\cW(t))=\cW(t+1)\fall t\in\I'
$$
holds. In case all fibers $\cW(t)$ are linear spaces one speaks of a \emph{linear bundle} and it is convenient to write $\cO:=\I\tm\set{0}$. 
\subsection{Linear integrodifference equations}
\label{sec22}
This section outlines the nonautonomous 'linear algebra' required to understand the dynamics of \eqref{ide0} based on linearization \cite{russ:16}. Here, if a kernel $l_t:\Omega\tm\Omega\to\K^{d\tm d}$ satisfies the Hille-Tamarkin conditions $(H_{p,p})$ with $n=d$, then a linear IDE
\begin{align}
	\tag{$L$}
	u_{t+1}&=\sL_t u_t,&
	\sL_tu&:=\int_\Omega l_t(\cdot,y)u(y)\d y
	\label{lin}
\end{align}
is well-defined with compact $\sL_t\in L(L_d^p(\Omega))$, $t\in\I'$. This yields the \emph{evolution operator}
\begin{align*}
	\Phi:\set{(t,\tau)\in\I^2:\,\tau\leq t}&\to L(L_d^p(\Omega)),&
	\Phi(t,\tau)
	&:=
	\begin{cases}
		\sL_{t-1}\cdots\sL_\tau,&\tau<t,\\
		\id_{L_d^p(\Omega)},&\tau=t.
	\end{cases}
\end{align*}

A linear IDE \eqref{lin} has an \emph{exponential dichotomy} on $\I$ \cite[p.~229, Def.~7.6.4]{henry:80}, if there exists a projection-valued sequence $(P_t)_{t\in\I}$ in $L(L_d^p(\Omega))$ and $K\geq 1$, $\alpha\in(0,1)$ so that $P_{t+1}\sL_t=\sL_tP_t$ and $\sL_t|_{N(P_t)}:N(P_t)\to N(P_{t+1})$ is an isomorphism for all $t\in\I'$ with
\begin{align*}
	\norm{\Phi(t,s)P_s}_{L(L_d^p(\Omega))}&\leq K\alpha^{t-s}\,&
	\norm{\Phi(s,t)[\id_{L_d^p(\Omega)}-P_t]}_{L_d^p(\Omega)}&\leq K\alpha^{t-s}\fall s\leq t. 
\end{align*}
This allows us to introduce nonautonomous counterparts to eigenvalue moduli in terms of the components of the \emph{dichotomy spectrum}
$$
	\Sigma_\I(\sL):=\set{\gamma>0:\,u_{t+1}=\tfrac{1}{\gamma}\sL_tu_t\text{ does not have an exponential dichotomy on }\I}
$$
for \eqref{lin}. The spectrum $\Sigma_\I(\sL)$ depends on the interval $\I$ such that $\Sigma_\I(\sL)\subseteq\Sigma_\Z(\sL)$. Under the bounded growth assumption 
\begin{equation}
	a_0:=\sup_{t\in\I'}
	\biggl(\int_\Omega\biggl(\int_\Omega\abs{l_t(x,y)}^{q'}\d y\biggr)^{\tfrac{p}{q'}}\d x\biggr)^{\tfrac{1}{p}}<\infty
	\label{bg} 
\end{equation}
it is contained in $(0,a_0]$ and there exists a $\gamma_0>0$ with $(\gamma_0,\infty)\subseteq(0,\infty)\setminus\Sigma_\I(\sL)$. As shown in \cite[Cor.~4.13]{russ:16}, $\Sigma_\I(\sL)\neq\emptyset$ is a union of at most countably many intervals which can only accumulate at some $b_\infty\geq 0$. One of the cases holds: 
\begin{enumerate}
	\item[$(S_1)$] $\Sigma_\I(\sL)$ consists of finitely many closed intervals:
	\begin{enumerate}
		\begin{figure}[ht]
			\includegraphics[scale=0.5]{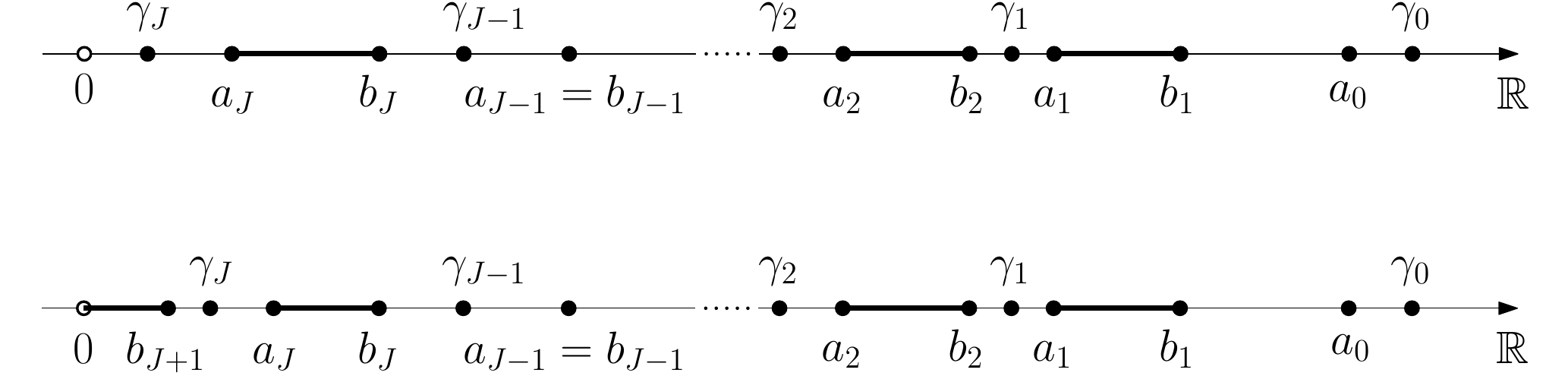}
			\caption{Case $(S_1^1)$ with $J$ compact spectral intervals (top) and $(S_1^2)$ with $J+1$ spectral intervals (bottom)}
			\label{figs11}
		\end{figure}
		\item[$(S_1^1)$] There exists a $J\in\N$ and reals $0<a_J\leq b_J<\ldots<a_1\leq b_1<a_0$ with $\Sigma_\I(\sL)=\bigcup_{j=1}^J[a_j,b_j]$. In this case we choose the $J$ rates $\gamma_j\in(b_{j+1},a_j)$, $1\leq j<J$, and $\gamma_J\in(0,a_J)$ (see \fref{figs11} (top)). 

		\item[$(S_1^2)$] There exists a $J\in\N_0$ and reals $0<b_{J+1}<a_J\leq b_J<\ldots<a_1\leq b_1<a_0$ with $\Sigma_\I(\sL)=(0,b_{J+1}]\cup\bigcup_{j=1}^J[a_j,b_j]$. Here we choose $J$ rates $\gamma_j\in(b_{j+1},a_j)$, $1\leq j\leq J$ (see \fref{figs11} (bottom)). 
	\end{enumerate}
	\begin{figure}[ht]
		\includegraphics[scale=0.5]{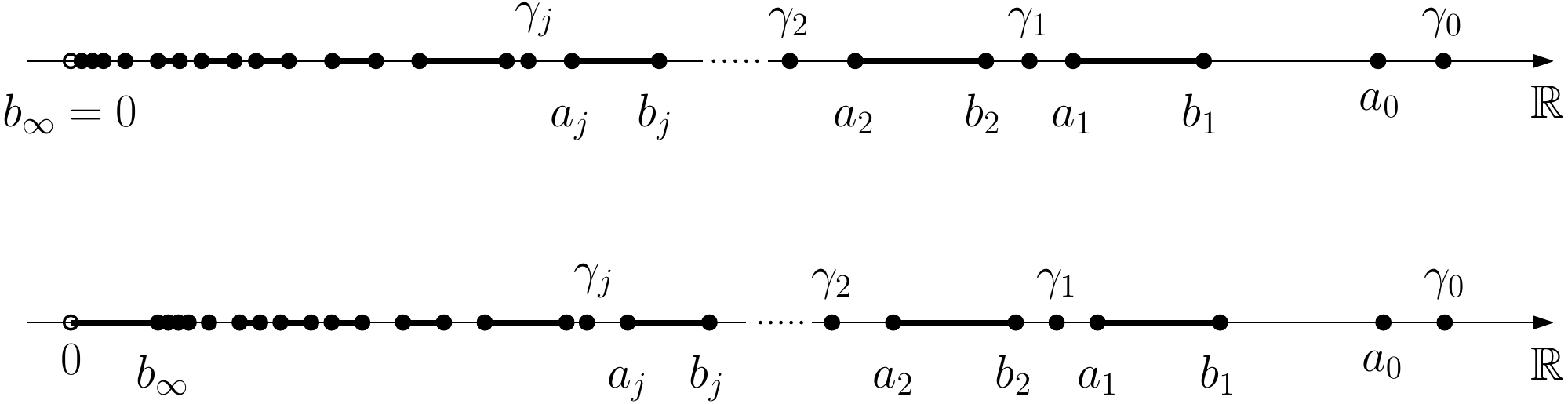}
		\caption{Case $(S_2)$ with infinitely many spectral intervals $[a_j,b_j]$ accumulating at $b_\infty=0$ i.e.\ $\sigma_\infty=\emptyset$ (top) and accumulating at $b_\infty>0$ (bottom)}
		\label{figs2}
	\end{figure}

	\item[$(S_2)$] $\Sigma_\I(\sL)$ consists of countably infinitely many intervals: There are strictly decreasing sequences $(a_j)_{j\in\N}$, $(b_j)_{j\in\N}$ in $(0,\infty)$ such that
	$
		\Sigma_\I(\sL)=\sigma_\infty\cup\bigcup_{j=1}^\infty[a_j,b_j], 
	$
	where $b_\infty<a_j\leq b_j$ for all $j\in\N$, $\lim_{j\to\infty}a_j=b_\infty$. Here, $\sigma_\infty=\emptyset$ for $b_\infty=0$ (see \fref{figs2} (top)) and otherwise $\sigma_\infty=(0,b_\infty]$ (see \fref{figs2} (bottom)). We choose countably many rates $\gamma_j\in(b_{j+1},a_j)$ for $j\in\N$. 
\end{enumerate}

Concerning nonautonomous counterparts to generalized eigenspaces, for each $\gamma>0$ we define the linear bundles
\begin{align*}
	\bar\cV_\gamma^+
	&:=
	\set{(\tau,u)\in\I\tm L_d^p(\Omega):\,\sup_{\tau\leq t}\norm{\Phi(t,\tau)u}_p\gamma^{\tau-t}<\infty},\\
	\bar\cV_\gamma^-
	&:=
	\set{(\tau,u)\in\I\tm L_d^p(\Omega):\,
	\begin{array}{r}
		\text{there exists a solution $(\phi_t)_{t\in\I}$ of \eqref{lin} so}
		\\
		\text{that $
		\sup_{\tau\leq t}\norm{\phi_t}_p\gamma^{\tau-t}<\infty$ and $\phi_\tau=u$}
	\end{array}};
\end{align*}
in case $\gamma\in(0,\infty)\setminus\Sigma_\I(\sL)$, one denotes $\bar\cV_\gamma^+$ as a $\gamma$-\emph{stable} and $\bar\cV_\gamma^-$ as a $\gamma$-\emph{unstable} bundle of \eqref{lin}. With rates $\gamma_j$ chosen as in $(S_1)$ or $(S_2)$ above, we set
\begin{align*} 
	\cV_0^+&:=\I\tm L_d^p(\Omega),&
	\cV_0^-&:=\cO,\\
	\cV_j^+&:=\bar\cV_{\gamma_j}^+,&
	\cV_j^-&:=\bar\cV_{\gamma_j}^-\quad\text{for }j>0. 
\end{align*}
Thanks to \cite[Cor.~4.13]{russ:16} the $\gamma$-unstable bundles $\bar\cV_\gamma^-$ are finite-dimensional and:
\begin{enumerate}
	\item[$(S_1)$] In both subcases the \emph{spectral bundles}
	\begin{align*}
		\cV_0&:=\cV_0^-,&
		\cV_j&:=\cV_{j-1}^+\cap\cV_j^-\neq\cO
		\fall 1\leq j\leq J
	\end{align*}
	are finite-dimensional invariant linear bundles of \eqref{lin} allowing the \emph{Whitney sum}
	$$
		\I\tm L_d^p(\Omega)=\bigoplus_{j=0}^J\cV_j\oplus\cV_J^+. 
	$$

	\item[$(S_2)$] The \emph{spectral bundles}
	\begin{align*}
		\cV_0&:=\cV_0^-,&
		\cV_j&:=\cV_{j-1}^+\cap\cV_j^-\neq\cO
		\fall j\in\N
	\end{align*}
	are finite-dimensional invariant linear bundles of \eqref{lin} with the \emph{Whitney sum}
	$$
		\I\tm L_d^p(\Omega)=\bigoplus_{j=0}^J\cV_j\oplus\cV_J^+\fall J\in\N. 
	$$
\end{enumerate}
The bundle $\cV_J^-=\bigoplus_{j=0}^J\cV_j$ satisfies $J\leq\dim\cV_J^-=\sum_{j=0}^J\dim\cV_j$. 
\section{Invariant bundles under projection methods}
\label{sec3}
Among the various numerical techniques to solve integral equations and thus to computationally simulate IDEs, we focus on projection methods \cite{atkinson:92}, \cite[pp.~446ff]{atkinson:han:00}. They are based on a sequence $\Pi_n:L_d^p(\Omega)\to X_n^d$, $n\in\N$, of bounded projections onto finite-dimen\-sional subspaces $X_n^d=X_n\tm\ldots\tm X_n\subseteq L_d^p(\Omega)$. Here each subspace $X_n$ is the span of $d_n\in\N$ linearly independent functions $\chi_1,\ldots,\chi_{d_n}:\Omega\to\R$ and it is advantageous to set $X_0:=L^p(\Omega)$. Under interpolation conditions (for collocation methods) or orthogonality conditions (for Galerkin methods), these functions determine projections $\pi_n:L^p(\Omega)\to X_n$. Given this, we set $\Pi_n:=\diag(\pi_n,\ldots,\pi_n)\in L(X_n^d)$ and refer to \sref{sec4} for concrete examples of suitable spaces $X_n$ and related projections $\pi_n$.

The resulting spatial discretizations of \eqref{ide0} are difference equations
\begin{align}
	\tag{$I_n$}
	u_{t+1}&=\sF_t^n(u_t),&
	\sF_t^n(u)&:=\Pi_n\sF_t(u)
	\label{iden}
\end{align}
being well-defined due to \pref{prophammer}. 
and $\vphi^n:\set{(t,\tau,u)\in\I^2\tm L_d^p(\Omega):\,\tau\leq t}\to L_d^p(\Omega)$ denotes their general solutions. 

Throughout the section, we suppose
\begin{hyp}
	Let $m\in\N$ and $p,q\in(1,\infty)$ with $mq<p$. Suppose that
	\begin{itemize}
		\item[(i)] $(H_{q,p})$ with a $X(\Omega)$-\emph{smoothing kernel}, that is, given a subset $X(\Omega)\subseteq L_d^p(\Omega)$ for all $t\in\I'$ the inclusions $\sK_tL_d^q(\Omega)\subseteq X(\Omega)$ hold and there exist $C_t\geq 0$ such that
		\begin{equation}
			\norm{\sK_t u}_{X(\Omega)}\leq C_t\norm{u}_q\fall u\in L_n^q(\Omega), 
			\label{nosmooth}
		\end{equation}

		\item[(ii)] $(C_{p,q}^m)$ with $c_2=\ldots=c_m=0$, $g_t(x,0)=0$ for all $t\in\I'$, a.a.\ $x\in\Omega$ and there exist measurable functions $\lambda_t,\bar\lambda_t:\Omega\to\R_+$ with
		\begin{align}
			\abs{D_2g_t(x,z)-D_2g_t(x,0)}
			&\leq
			\lambda_t(x)\quad\text{for a.a.\ }x\in\Omega\text{ and all }z\in\K^d,
			\label{no7s}\\
			\abs{g_t(x,z)-g_t(x,\bar z)}
			&\leq
			\bar\lambda_t(x)\abs{z-\bar z}\quad\text{for a.a.\ }x\in\Omega\text{ and all }z,\bar z\in\K^d
			\label{no62}
		\end{align}
		such that $\sup_{t\in\I'}C_t\biggl(\int_\Omega\bar\lambda_t(y)^{\tfrac{p}{p-q}}\d y\biggr)^{\tfrac{p-q}{p}}<\infty$ and
		\begin{align}
			L:=&\sup_{t\in\I'}
			\biggl(
			\int_\Omega\biggl(\int_\Omega\abs{k_t(x,y)}^{q'}\d y\biggr)^{\tfrac{p}{q'}}\d x
			\biggr)^{\tfrac{1}{p}}
			\biggl(\int_\Omega\lambda_t(y)^{\tfrac{p}{p-q}}\d y\biggr)^{\tfrac{p-q}{p}}
			<
			\infty,
			\label{no63}\\
			&\sup_{t\in\I'}
			\biggl(
			\int_\Omega\biggl(\int_\Omega\abs{k_t(x,y)}^{q'}\d y\biggr)^{\tfrac{p}{q'}}\d x
			\biggr)^{\tfrac{1}{p}}
			\biggl(\int_\Omega\bar\lambda_t(y)^{\tfrac{p}{p-q}}\d y\biggr)^{\tfrac{p-q}{p}}
			<
			\infty,
			\label{no64}
		\end{align}

		\item[(iii)] there exists a function $\Gamma:\R_+\to(0,\infty)$ with $\lim_{\varrho\searrow 0}\Gamma(\varrho)=0$ such that the \emph{discretization error} satisfies
		\begin{equation}
			\norm{u-\Pi_nu}_p
			\leq
			\Gamma(\tfrac{1}{n})\norm{u}_{X(\Omega)}\fall n\in\N,\,u\in X(\Omega). 
			\label{thmpifb4}
		\end{equation}
	\end{itemize}
\end{hyp}
The subsequent \sref{sec4} is devoted to specific error estimates \eqref{thmpifb4} for various function spaces $X(\Omega)$. Moreover, in order to illustrate the smoothing property (i) we consider two nonsmooth kernels depending on dispersal rates $\delta_t>0$, $t\in\I'$: 
\begin{example}[Laplace kernel]
	Consider the \emph{Laplace kernel} (see \cite[pp.~18ff]{lutscher:19})
	\begin{align*}
		k_t:\Omega^2&\to(0,\infty),&
		k_t(x,y):=\tfrac{\delta_t}{2}e^{-\delta_t\abs{x-y}}
	\end{align*}
	acting on a habitat $\Omega=(-\tfrac{l}{2},\tfrac{l}{2})$, $l>0$. As a continuous, globally bounded function it satisfies $(H_{q,p})$ with arbitrary exponents $p,q\in(1,\infty)$ and $d=n=1$. In particular, 
	\begin{align*}
		\int_\Omega\intoo{\int_\Omega\abs{k_t(x,y)}^{q'}\d y}^{p/q'}\d x
		\leq
		\int_\Omega\intoo{\int_\Omega\intoo{\tfrac{\delta_t}{2}}^{q'}\d y}^{p/q'}\d x
		=
		l^{1+p-1/q}\intoo{\tfrac{\delta_t}{2}}^p
	\end{align*}
	and therefore $\norm{\sK_t}_{L(L^q(\Omega),L^p(\Omega))}\leq l^{1+p-1/q}\intoo{\tfrac{\delta_t}{2}}^p$ for all $t\in\I'$. In addition, 
	$$
		(\sK_t u)'=\sK_t^{(1)}u
		\quad\text{with}\quad
		(\sK_t^{(1)}u)(x):=
		-\frac{\delta_t^2}{2}
		\int_\Omega\sgn(x-y)e^{-\delta_t\abs{x-y}}u(y)\d y
	$$
	for all $x\in\Omega$ and $u\in L^q(\Omega)$, as well as $\|\sK_t^{(1)}\|_{L(L^q(\Omega),L^p(\Omega))}\leq l^{1+p-1/q}\intoo{\tfrac{\delta_t^2}{2}}^p$. Hence, we can choose the smaller space $X(\Omega):=W^{1,p}(\Omega)$ and obtain
	\begin{align*}
		\norm{\sK_t u}_{W^{1,p}(\Omega)}
		&=
		\intoo{\norm{\sK_tu}_p^p+\|\sK_t^{(1)}u\|_p^p}^{1/p}
		\leq
		C_t\norm{u}_q
		\fall u\in L^q(\Omega),
	\end{align*}
	with the constants $C_t:=l^{1+1/p-1/(pq)}\tfrac{\delta_t}{2}(1+(\tfrac{\delta_t}{2})^p)^{1/p}$. In conclusion, the Laplace kernel is $W^{1,p}(\Omega)$-smoothing and the estimate \eqref{nosmooth} holds. 
\end{example}
\begin{example}[root kernel]
	The \emph{exponential root kernel} (see \cite[p.~72 for $\alpha=\tfrac{1}{2}$]{lutscher:19})
	\begin{align*}
		k_t:\Omega^2&\to(0,\infty),&
		k_t(x,y):=\tfrac{\delta_t^2}{2}e^{-\delta_t\abs{x-y}^\alpha}
	\end{align*}
	with exponent $\alpha\in(0,1]$ may be defined on an open, bounded habitat $\Omega\subset\R^\kappa$. Being continuous and globally bounded, it again fulfills $(H_{q,p})$ with arbitrary $p,q\in(1,\infty)$ and $d=n=1$. Given $u\in L^q(\Omega)$ the H\"older inequality implies
	$$
		\abs{\sK_tu(x)}
		\stackrel{\eqref{defK}}{=}
		\abs{\int_\Omega k_t(x,y)u(y)\d y}
		\leq
		\intoo{\int_\Omega\abs{k_t(x,y)}^{q'}\d y}^{1/q'}\norm{u}_q
		\leq
		\frac{\delta_t^2}{2}\lambda_\kappa(\Omega)^{1/q'}\norm{u}_q
	$$
	for all $x\in\Omega$, as well as using the Mean Value Estimate (applied to $x\mapsto e^{-x}$)
	\begin{align*}
		\abs{\sK_tu(x)-\sK_tu(\bar x)}
		&\stackrel{\eqref{defK}}{=}
		\abs{\int_\Omega[k_t(x,y)-k_t(\bar x,y)]u(y)\d y}\\
		&\leq
		\intoo{\int_\Omega\abs{k_t(x,y)-k_t(\bar x,y)}^{q'}\d y}^{1/q'}\norm{u}_q\\
		&\leq
		\frac{\delta_t^2}{2}
		\intoo{\int_\Omega\abs{e^{-\delta_t\abs{x-y}^\alpha}-e^{-\delta_t\abs{\bar x-y}^\alpha}}^{q'}\d y}^{1/q'}\norm{u}_q\\
		&\leq
		\frac{\delta_t^3}{2}
		\intoo{\int_\Omega\abs{\abs{x-y}^\alpha-\abs{\bar x-y}^\alpha}^{q'}\d y}^{1/q'}\norm{u}_q\\
		&\leq
		\frac{\delta_t^3}{2}
		\intoo{\int_\Omega\abs{\abs{x-y}-\abs{\bar x-y}}^{\alpha q'}\d y}^{1/q'}\norm{u}_q\\
		&\leq
		\frac{\delta_t^3}{2}
		\intoo{\int_\Omega\abs{x-\bar x}^{\alpha q'}\d y}^{1/q'}\norm{u}_q\\
		&=
		\frac{\delta_t^3}{2}\lambda_\kappa(\Omega)^{1/q'}\abs{x-\bar x}^\alpha\norm{u}_q\fall x,\bar x\in\Omega. 
	\end{align*}
	Thus, $\sK_tu$ is H\"older with exponent $\alpha$ and we can choose $X(\Omega)=C^\alpha(\overline{\Omega})$ with
	$$
		\norm{\sK_tu}_{C^\alpha(\overline{\Omega})}
		\leq
		\tfrac{\delta_t^2}{2}\lambda_\kappa(\Omega)^{1/q'}\max\set{1,\delta_t}\norm{u}_q
		\fall u\in L^q(\Omega).
	$$
	In summary, the exponential root kernel is $C^\alpha(\overline{\Omega})$-smoothing and the estimate \eqref{nosmooth} holds with the constant $C_t:=\tfrac{\delta_t^2}{2}\lambda_\kappa(\Omega)^{1/q'}\max\set{1,\delta_t}$ for all $t\in\I'$. 
\end{example}
\subsection{Pseudo-stable and -unstable bundles}
Under the above Hypotheses (i--iii), the right-hand side of \eqref{ide0} is at least continuously differentiable (cf.~\pref{prophammer}). Its variational equation along the trivial solution
\begin{align}
	\tag{$V_0$}
	v_{t+1}&=D\sF_t(0)v_t,&
	D\sF_t(0)v&:=\int_\Omega k_t(\cdot,y)D_2g_t(y,0)v(y)\d y
	\label{var0}
\end{align}
is a linear IDE of the form \eqref{lin} with kernel $l_t(x,y):=k_t(x,y)D_2g_t(y,0)\in\K^{d\tm d}$, whose dichotomy spectrum will be denoted by $\Sigma_\I$. 
\begin{figure}[ht]
	\includegraphics[scale=0.5]{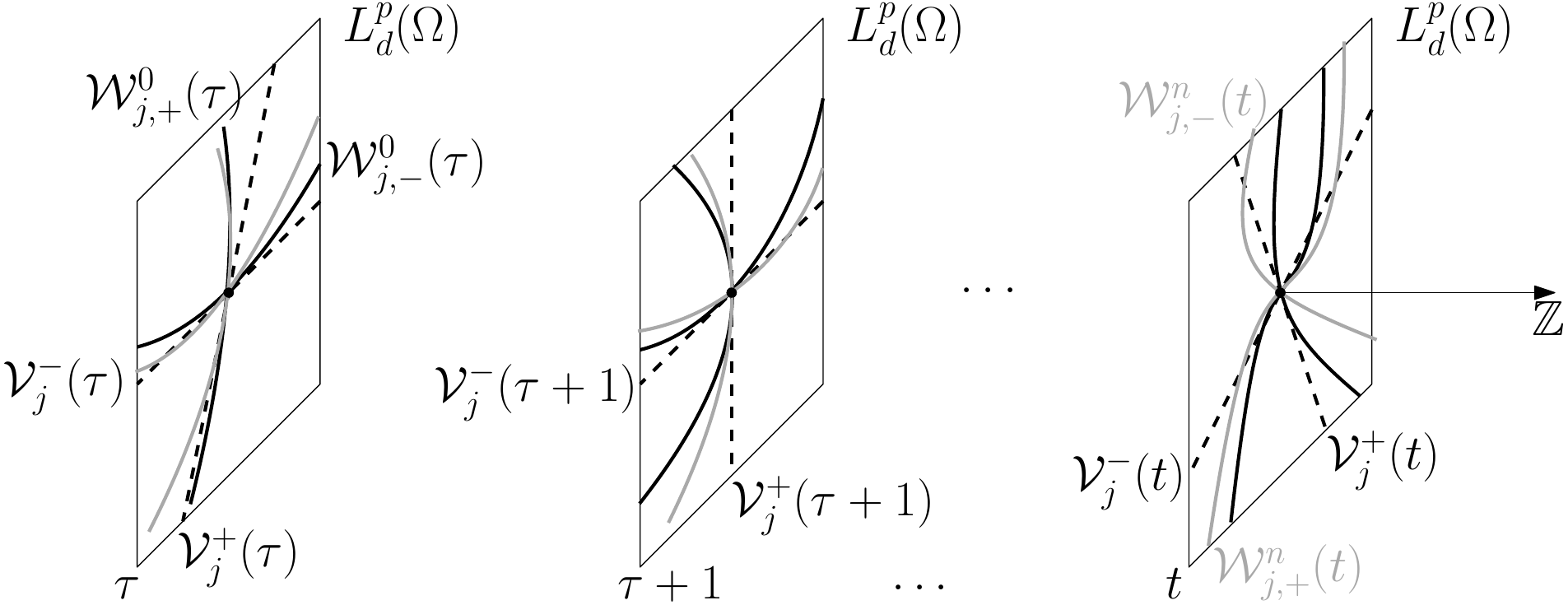}
	\caption{The pseudo-stable bundle $\cW_{j,+}^0$ and the pseudo-un\-stable bundle $\cW_{j,-}^0$ (black) of \eqref{ide0} intersect along the trivial solution and perturb as bundles $\cW_{j,+}^n$ and $\cW_{j,-}^n$, $n\geq N$ (grey) for the spatial discretizations \eqref{iden}. Both consist of $t$-fibers being graphs over $\cV_j^+$ resp.\ $\cV_j^-$.}
	\label{fig3}
\end{figure}
\begin{theorem}[pseudo-stable and -unstable bundles under discretization]\label{thmpifb}
	Let Hypotheses (i--iii) hold, $\gamma_j>0$, $j\in\N_0$, be as in $(S_1)$ or $(S_2)$, choose $\alpha_j<\beta_j$ so that
	\begin{align*}
		\alpha_j&<\gamma_j<\beta_j,&
		(\alpha_j,\beta_j)\cap\Sigma_\I&=\emptyset
	\end{align*}
	and let $\cV_j^+,\cV_j^-$ denote the associated linear bundles of the variational equation \eqref{var0}. Then there exists a $K\geq 1$ depending only on \eqref{var0}, so that if 
	\begin{align}
		L&<\frac{\delta_{\max}}{4K},&
		\delta_{\max}&:=\frac{\beta_j-\alpha_j}{2}, 
		\label{nosmall}
	\end{align}
	$\delta\in(4KL,\delta_{\max}]$ is fixed and $\Gamma_j:=[\alpha_j+\delta,\beta_j-\delta]$, then there exists a $N\in\N$ such that the following statements are true for $n=0$ and $n\geq N$ (see \fref{fig3}): 
	\begin{enumerate}
		\item[(a)] In case $\I$ is unbounded above, then the \emph{pseudo-stable bundle}
		$$
			\cW_{j,+}^n
			:=
			\set{(\tau,u)\in\I\tm L_d^p(\Omega):\,
			\sup_{\tau\leq t}\gamma^{\tau-t}\norm{\vphi^n(t;\tau,u)}_p<\infty}
		$$
		is a forward invariant bundle of \eqref{iden}, which is independent of $\gamma\in\Gamma_j$ and allows the representation
		$$
			\cW_{j,+}^n
			=
			\set{(\tau,v+w_{j,+}^n(\tau,v))\in\I\tm L_d^p(\Omega):\,(\tau,v)\in\cV_j^+}
		$$
		with continuous mappings $w_{j,+}^n:\cV_j^+\to L_d^p(\Omega)$ satisfying $w_{j,+}^n(\tau,0)=0$ and
		\begin{itemize}
			\item[$(a_1)$] there exist $C_0^+(n),N_0^+>0$ with $\lim_{n\to\infty}C_0^+(n)=\tfrac{K^2L}{\delta-2KL}=:C_0^+(0)$ and for all $(\tau,v)\in\cV_j^+$ holds
			\begin{align*}
				\lip w_{j,+}^n(\tau,\cdot)&\leq C_0^+(n),&
				\norm{w_{j,+}^n(\tau,v)-w_{j,+}^0(\tau,v)}_p
				&\leq
				N_0^+\Gamma(\tfrac{1}{n})\norm{v}_p, 
			\end{align*}

			\item[$(a_2)$] if $\alpha_j^{m_+}<\beta_j$ and $\delta_{\max}:=\min\Bigl\{\tfrac{\beta_j-\alpha_j}{2},\alpha_j\bigl(\sqrt[{m_+}]{\frac{\alpha_j+\beta_j}{\alpha_j+\alpha_j^{m_+}}}-1\bigr)\Bigr\}$, then the derivatives $D_2^\ell w_{j,+}^n:\cV_j^+\to L_\ell(L_d^p(\Omega))$ exist up to order $\ell\leq m_+\leq m$ as continuous maps and for $1\leq\ell<m_+$ there are $N_\ell^+>0$ such that
			\begin{align*}
				\hspace*{-10mm}
				\norm{D_2^\ell w_{j,+}^n(\tau,v)-D_2^\ell w_{j,+}^0(\tau,v)}_{L_\ell(L_d^p(\Omega))}
				&\leq
				N_\ell^+\Gamma(\tfrac{1}{n})\norm{v}_p
				\fall(\tau,v)\in\cV_j^+. 
			\end{align*}
		\end{itemize}

		\item[(b)] In case $\I$ is unbounded below, then the \emph{pseudo-unstable bundle}
		$$
			\cW_{j,-}^n
			:=
			\set{(\tau,u)\in\I\tm L_d^p(\Omega):
			\begin{array}{l}
			\text{there exists a solution }(\phi_t)_{t\in\I}\text{ of \eqref{iden} so}\\
			\text{that }\sup_{t\leq\tau}\gamma^{\tau-t}\norm{\phi_t}_p<\infty\text{ and }\phi_\tau=u
			\end{array}
			}
		$$
		is a finite-dimensional (unless for spectra $(S_1^1)$ and $j=J$) invariant bundle of \eqref{iden}, which is independent of $\gamma\in\Gamma_j$ and allows the representation
		$$
			\cW_{j,-}^n
			=
			\set{(\tau,v+w_{j,-}^n(\tau,v))\in\I\tm L_d^p(\Omega):\,(\tau,v)\in\cV_j^-}
			\subseteq
			\begin{cases}
				\I\tm X(\Omega),&n=0,\\
				\I\tm X_n^d,&n\geq N
			\end{cases}
		$$
		with continuous mappings $w_{j,-}^n:\cV_j^-\to L_d^p(\Omega)$ satisfying $w_{j,-}^n(\tau,0)=0$ and
		\begin{itemize}
			\item[$(b_1)$] there exist $C_0^-(n),N_0^->0$ with $\lim_{n\to\infty}C_0^-(n)=\tfrac{K^2L}{\delta-2KL}=:C_0^-(0)$ and for all $(\tau,v)\in\cV_j^-$ holds
			\begin{align*}
				\lip w_{j,-}^n(\tau,\cdot)&\leq C_0^-(n),&
				\norm{w_{j,-}^n(\tau,v)-w_{j,-}^0(\tau,v)}_p
				&\leq
				N_0^-
				\Gamma(\tfrac{1}{n})\norm{v}_p, 
			\end{align*}

			\item[$(b_2)$] if $\alpha_j<\beta_j^{m_-}$ and $\delta_{\max}:=\min\Bigl\{\tfrac{\beta_j-\alpha_j}{2},\beta_j\bigl(1-\sqrt[{m_-}]{\frac{\alpha_j+\beta_j}{\beta_j+\beta_j^{m_-}}}\bigr)\Bigr\}$, then the derivatives $D_2^\ell w_{j,-}^n:\cV_j^-\to L_\ell(L_d^p(\Omega))$ exist up to order $\ell\leq m_-\leq m$ as continuous maps and for $1\leq\ell<m_-$ there are $N_\ell^->0$ such that
			\begin{align*}
				\hspace*{-10mm}
				\norm{D_2^\ell w_{j,-}^n(\tau,v)-D_2^\ell w_{j,-}^0(\tau,v)}_{L_\ell(L_d^p(\Omega))}
				&\leq
				N_\ell^-\Gamma(\tfrac{1}{n})\norm{v}_p
				\fall(\tau,v)\in\cV_j^-. 
			\end{align*}
		\end{itemize}

		\item[(c)] If $\I=\Z$ and $L<\frac{\delta}{6K}$, then $\cW_{j,+}^n\cap\cW_{j,-}^n=\cO$ and the zero solution is the only entire solution $\phi$ to \eqref{iden} satisfying $\sup_{t\in\Z}\gamma^{-t}\norm{\phi_t}_p<\infty$ for all $\gamma\in\Gamma_j$. 
	\end{enumerate}
\end{theorem}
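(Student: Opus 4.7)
The plan is to apply the general perturbation result for invariant bundles from Appendix~\ref{appA} separately to \eqref{ide0} and to each spatial discretization \eqref{iden} with $n$ large, and then to compare the resulting bundles quantitatively using the smoothing property (i) and the error bound \eqref{thmpifb4}. Write
\[
\sF_t(u) = D\sF_t(0)u + R_t(u),
\qquad
\sF_t^n(u) = \Pi_n D\sF_t(0)u + \Pi_n R_t(u),
\]
where by \eqref{no7s} and \eqref{no63} the remainder $R_t = \sF_t - D\sF_t(0)$ is globally Lipschitz with $\lip R_t \leq L$ uniformly in $t \in \I'$, and by $c_2 = \cdots = c_m = 0$ its derivatives $D^\ell R_t$ up to order $m$ are globally bounded with constants independent of $t$. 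The gap condition $(\alpha_j,\beta_j) \cap \Sigma_\I = \emptyset$ implies that \eqref{var0} admits an exponential dichotomy on $\I$ for every $\gamma \in (\alpha_j,\beta_j)$, with stable/unstable bundles $\cV_j^+,\cV_j^-$ (from \sref{sec22}) and some constant $K \geq 1$ depending only on \eqref{var0}. Under \eqref{nosmall}, Appendix~\ref{appA} produces the bundles $\cW_{j,\pm}^0$ in the claimed graph form with Lipschitz constants $\leq K^2L/(\delta-2KL)$; independence of $\gamma \in \Gamma_j$ is the uniqueness statement of that theorem combined with the spectral gap.

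For $n \geq N$, the smoothing property (i) together with \eqref{thmpifb4} yields
\[
\norm{(\Pi_n - \id) D\sF_t(0) u}_p
\leq \Gamma(\tfrac{1}{n}) \norm{D\sF_t(0) u}_{X(\Omega)}
\leq \Gamma(\tfrac{1}{n}) C_t \norm{D_2 g_t(\cdot,0) u}_q
\]
uniformly in $t$. Hence by roughness of exponential dichotomies, there exists $N \in \N$ so that for every $n \geq N$ the projected linear equation $v_{t+1} = \Pi_n D\sF_t(0) v_t$ inherits a dichotomy with the same spectral gap containing $\Gamma_j$ and with constant close to $K$, and with projections $P_t^n$ that are $O(\Gamma(1/n))$-close to $P_t$. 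Applying Appendix~\ref{appA} to \eqref{iden} with perturbation $\Pi_n R_t$ then delivers the graph representation of $\cW_{j,\pm}^n$ with Lipschitz constants $C_0^\pm(n)$ converging to $K^2L/(\delta-2KL)$. The inclusion $\cW_{j,-}^n \subseteq \I\tm X_n^d$ is immediate because every entire solution of \eqref{iden} lies in $R(\Pi_n) = X_n^d$ via the structure $u_{t+1} = \Pi_n \sF_t(u_t)$.

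The quantitative estimates of $(a_1),(b_1)$ and $(a_2),(b_2)$ come from realising $w_{j,\pm}^0$ and $w_{j,\pm}^n$ as parametrised fixed points of Lyapunov--Perron operators $T_0, T_n$ on a $\gamma$-weighted space of sequences in $L_d^p(\Omega)$, and using the central bound
\[
\norm{\sF_t^n(u) - \sF_t(u)}_p
= \norm{(\Pi_n - \id)\sF_t(u)}_p
\leq \Gamma(\tfrac{1}{n}) C_t \norm{\sG_t(u)}_q,
\]
which via \eqref{no64} is controlled by $\Gamma(1/n)\norm{u}_p$ up to a uniform constant. The $C^0$-bounds then follow from the standard a-priori estimate for fixed points of contractions, while the $C^{m_\pm-1}$-bounds arise by differentiating the fixed point equation $\ell$ times and using the gap conditions $\alpha_j^{m_+} < \beta_j$ respectively $\alpha_j < \beta_j^{m_-}$ to guarantee that the induced operator remains a contraction on the appropriate weighted space of multilinear maps. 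For part~(c), any $u \in \cW_{j,+}^n(\tau) \cap \cW_{j,-}^n(\tau)$ extends to an entire solution bounded in the $\gamma$-weighted norm on $\Z$, and hence is a fixed point of the two-sided Lyapunov--Perron operator; the strengthened smallness $L < \delta/(6K)$ turns this operator into a strict contraction globally, forcing the entire solution to vanish.

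The main obstacle I anticipate is the uniform-in-$n$ control required in Step~2: establishing an exponential dichotomy of the projected linear equation $v_{t+1} = \Pi_n D\sF_t(0)v_t$ with constants and invariant projections $L^p$-close to those of \eqref{var0} \emph{uniformly} over $t \in \I'$ and $n \geq N$. The smoothing hypothesis (i) is crafted precisely for this task—it converts the $\Gamma(1/n)$-small discretization error measured in $X(\Omega)$ into an operator-norm estimate on $L_d^p(\Omega)$—but verifying uniform roughness on an unbounded discrete interval demands careful bookkeeping. Once this uniformity is secured, the invariant bundle construction and the quantitative comparison reduce to a standard Lyapunov--Perron computation, with the higher-order regularity supplied by the spectral gap conditions of $(a_2)$ and $(b_2)$.
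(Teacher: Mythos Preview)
Your approach differs from the paper's in a structurally important way. The paper does \emph{not} apply Appendix~\ref{appA} separately to \eqref{ide0} and to each \eqref{iden} with its own projected linear part $\Pi_nD\sF_t(0)$. Instead it writes a single $\theta$-parametrised equation
\[
u_{t+1}=\sL_tu_t+\sN_t(u_t)+\theta\bar\sN_t(u_t),
\qquad
\bar\sN_t(u):=\tfrac{1}{\Gamma(1/n)}[\Pi_n-\id_{L_d^p(\Omega)}]\sF_t(u),
\]
with the \emph{fixed} linear part $\sL_t=D\sF_t(0)$ from \eqref{var0}, so that $\theta=0$ recovers \eqref{ide0} and $\theta=\Gamma(1/n)$ recovers \eqref{iden}. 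The smoothing property (i) together with \eqref{thmpifb4} makes $\bar\sN_t$ globally Lipschitz with a constant $\bar L$ independent of $n$, and one simply chooses $N$ so that $\Gamma(1/n)\in\Theta$ for $n\geq N$. Then \tref{thmifb} is invoked \emph{once}, giving bundles $\cW^\pm(\theta)$ parametrised by $\theta\in\Theta$, all represented as graphs over the \emph{same} linear bundles $\cV_j^\pm$ of \eqref{var0}. The convergence estimates in $(a_1)$, $(b_1)$, $(a_2)$, $(b_2)$ are then read off directly from the Lipschitz-in-$\theta$ bounds \eqref{noA4}, \eqref{noA8} and the derivative bounds \eqref{thmifb6}, \eqref{thmifb6s} via the Mean Value Estimate.

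Your route---perturbing the linear part to $\Pi_nD\sF_t(0)$ and invoking roughness of dichotomies---can in principle be pushed through, but it creates work the paper avoids entirely. First, you never need a roughness theorem: the dichotomy of \eqref{var0} is used unchanged for every $n$. Second, and this is a real gap in your write-up, applying Appendix~\ref{appA} with linear part $\Pi_nD\sF_t(0)$ produces graphs over $R(P_\tau^n)$ and $N(P_\tau^n)$, not over $\cV_j^+(\tau)$ and $\cV_j^-(\tau)$ as the theorem asserts; you would then have to reparametrise using the $O(\Gamma(1/n))$-closeness of $P_\tau^n$ to $P_\tau$ and track how this affects all the Lipschitz and derivative estimates. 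The paper's device of absorbing the entire discretisation error $[\Pi_n-\id]\sF_t$ into the nonlinear perturbation, scaled so that its Lipschitz constant is uniform in $n$, sidesteps both issues and turns the comparison into a one-line application of the parameter dependence already built into \tref{thmifb}.
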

The discretization bound $N\in\N$ under which the bundles $\cW_{j,+}^0,\cW_{j,-}^0$ of \eqref{ide0} persist for $n\geq N$ is large, provided the spectral gap $\beta_j-\alpha_j$ is small, the habitat $\Omega\subset\R^\kappa$ is large (in terms of its diameter), the Lipschitz constants $\bar\lambda_t$ or the `operator norms' in \eqref{no63} are large, or the convergence rate via $\Gamma$ is small. 

In case $j=0$ the bundles constructed in \tref{thmpifb} reduce to $\cW_{0,+}^n=\I\tm L_d^p(\Omega)$ and $\cW_{0,-}^n=\cO$. Further specific mention deserves a dichotomy spectrum of the form $(S_1^1)$ and $j=J$, where we have $\cW_{J,+}^n=\cO$ and $\cW_{J,-}^n=\I\tm X_n^d$ ($n\geq N$) or $X(\Omega)$ ($n=0$). 

\begin{figure}[ht]
	\includegraphics[scale=0.5]{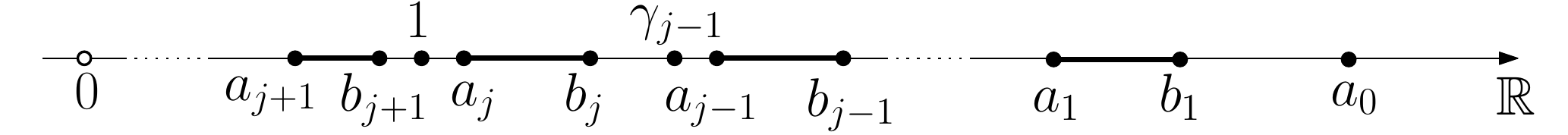}
	\caption{Hyperbolic situation $1\not\in\Sigma_\Z$ yielding the stable bundles $\cW_{j,+}^n$ and the unstable bundles $\cW_{j,-}^n$}
	\label{fig4}
\end{figure}
\begin{remark}[hyperbolic case]
	The trivial solution of \eqref{ide0} is called \emph{hyperbolic}, if $1\not\in\Sigma_\Z$. In this case we assume $1\in(b_{j+1},a_j)$ for some $j\in\N$ (see \fref{fig4}). 

	(1) Then $\cW_s:=\cW_{j,+}^0$ is called \emph{stable} and $\cW_u:=\cW_{j,-}^0$ \emph{unstable bundle} for \eqref{ide0}. The stable bundle contains the \emph{strongly stable bundles} $\cW_{i,+}^0$, $i>j$, while the unstable bundle contains the finitely many \emph{strongly unstable bundles} $\cW_{i,-}^0$, $0<i<j$, of \eqref{ide0}.
	
	(2) The dichotomy spectrum $\Sigma_\Z$ of \eqref{var0} behaves upper-semi\-con\-tinuously under discretization, that is, if $\Sigma_\Z^n$ denotes the spectrum of the discretized variational equation
	\begin{equation}
		\tag{$V_n$}
		v_{t+1}=\Pi_nD\sF_t(0)v_t,
		\label{varn}
	\end{equation}
	then for each $\eps>0$ there exists a $\bar N\in\N$ such that $\Sigma_\Z^n\subseteq B_\eps(\Sigma_\Z^0)$ for all $n\geq\bar N$. This guarantees that hyperbolicity is preserved under discretization. In particular, $\cW_{i,+}^0$ persist as (strongly) stable bundles $\cW_{i,+}^n$, $j\leq i$, and $\cW_{j,-}^0$ persists as (strongly) unstable bundles $\cW_{j,-}^n$, $0<i\leq j$, of the discretizations \eqref{iden} for $n\geq\max\set{N,\bar N}$. 
\end{remark}
\begin{remark}[periodic IDEs]
	The periodic situation $k_t=k_{t+\theta}:\Omega\tm\Omega\to\K^{d\tm n}$ and $g_t=g_{t+\theta}:\Omega\tm\K^d\to\K^n$ for all $t\in\Z$ with some $\theta\in\N$ is a relevant special case of general nonautonomous problems \eqref{ide0}. Here the dichotomy spectrum is
	$$
		\Sigma_{\Z}=\sqrt[\theta]{\abs{\sigma(D\sF_\theta(0)\cdots D\sF_1(0))}}\setminus\set{0}
	$$
	and the $\theta$-periodicity of \eqref{ide0} extends to the spatial discretizations \eqref{iden}, as well as to the bundles $\cW_{j,+}^n$, $\cW_{j,-}^n$ for $n=0$ and $n\geq N$. For the fibers this means $\cW_{j,+}^n(t+\theta)=\cW_{j,+}^n(t)$ and $\cW_{j,-}^n(t+\theta)=\cW_{j,-}^n(t)$ for all $t\in\Z$. 
\end{remark}
\begin{proof}[Proof of \tref{thmpifb}]
	Due to \pref{prophammer} the right-hand side of \eqref{ide0} is of class $C^m$ with 
	$$
		\sL_tv:=D\sF_t(0)v=\int_\Omega k_t(\cdot,y)D_2g_t(y,0)v(y)\d y\fall t\in\I',\,v\in L_d^p(\Omega)
	$$
	as derivative along the trivial solution. It results from the Lipschitz condition \eqref{no62} and \cite[p.~363, Prop.~C.1.1]{poetzsche:10} that $\abs{D_2g_t(y,0)}\leq\bar\lambda_t(y)$ for all $t\in\I'$ and a.a.\ $y\in\Omega$. We derive from \eqref{no64} that the variational equation \eqref{var0} satisfies \eqref{bg} (with kernel $l_t(x,y):=k_t(x,y)D_2g_t(y,0)$) and $\Sigma_\I\subseteq(0,a_0]$; so the spectrum is of the form required in \sref{sec22}. 
	
	In order to apply the perturbation \tref{thmifb} in $X=L_d^p(\Omega)$ we first specify the general difference equation \eqref{fbdeq} as 
	\begin{equation}
		u_{t+1}
		=
		\sF_t(u_t)+\tfrac{\theta}{\Gamma(\tfrac{1}{n})}\intcc{\sF_t^n(u_t)-\sF_t(u_t)}
		=
		\sL_tu_t+\sN_t(u_t)+\theta\bar\sN_t(u_t)
		\label{eqtheta}
	\end{equation}
	depending on a real parameter $\theta$ and with the functions $\sN_t,\bar\sN_t:L_d^p(\Omega)\to L_d^p(\Omega)$, 
	\begin{align*}
		\sN_t(u)&:=\sF_t(u)-D\sF_t(0)u=\int_\Omega k_t(\cdot,y)h_t(y,u(y))\d y,\\
		\bar\sN_t(u)&:=\frac{1}{\Gamma(\tfrac{1}{n})}[\Pi_n-\id_{L_d^p(\Omega)}]\sF_t(u)
		=
		\frac{1}{\Gamma(\tfrac{1}{n})}\intcc{\Pi_n-\id_{L_d^p(\Omega)}}\int_\Omega k_t(\cdot,y)g_t(y,u(y))\d y,
	\end{align*}
	where $h_t(y,z):=g_t(y,z)-D_2g_t(y,0)z$. For $\theta=0$ the semilinear difference equation \eqref{eqtheta} reduces to the original problem \eqref{ide0}, while for $\theta=\Gamma(\tfrac{1}{n})$ one obtains the spatial discretization \eqref{iden}. We next verify the assumptions of \tref{thmifb}. 

	\emph{ad $(H1)$}: 
	According to the choice of $\alpha_j<\beta_j$, we obtain that the variational equation \eqref{var0} along the trivial solution satisfies the dichotomy estimates \eqref{noh1} with real constants $K\geq 1$, $\alpha=\alpha_j$, $\beta=\beta_j$ and an invariant projector $P_t\in L(L_d^p(\Omega))$, $t\in\I$. By the choice of $\gamma\in\Gamma_j$ one has $R(P_t)=\cV_j^+(t)$ (if $\I$ is unbounded above) and $N(P_t)=\cV_j^-(t)$ (if $\I$ is unbounded below) for all $t\in\I$. 

	\emph{ad $(H2)$}: From Hypothesis (ii) it immediately results that both $\sN_t$ and $\bar\sN_t$ vanish identically in $0$. Using the Mean Value Theorem we have for all $t\in\I'$ that
	\begin{align*}
		\abs{h_t(x,z)-h_t(x,\bar z)}
		&=
		\abs{g_t(x,z)-g_t(x,\bar z)-D_2g_t(x,0)(z-\bar z)}\\
		&=
		\abs{\int_0^1D_2g_t(x,\bar z+\vartheta(z-\bar z))-D_2g_t(x,0)\d\vartheta(z-\bar z)}\\
		&\leq
		\int_0^1\abs{D_2g_t(x,\bar z+\vartheta(z-\bar z))-D_2g_t(x,0)}\d\vartheta\abs{z-\bar z}
		\stackrel{\eqref{no7s}}{\leq}
		\lambda_t(x)\abs{z-\bar z}
	\end{align*}
	for a.a.\ $x\in\Omega$ and $z,\bar z\in\K^d$. Hence, if $\sH_t:L_d^p(\Omega)\to L_n^q(\Omega)$ denotes the Nemytskii operator induced by $h_t$, then \cref{cormag} implies for $t\in\I'$ that $\sN_t=\sK_t\sH_t$ and then
	\begin{align*}
		\norm{\sN_t(u)-\sN_t(\bar u)}_p
		& \leq
		\norm{\sK_t}_{L(L_n^q(\Omega),L_d^p(\Omega))}\norm{\sH_t(u)-\sH_t(\bar u)}_q\\
		& \stackrel{\eqref{propmak1}}{\leq}
		\biggl(\int_\Omega\biggl(\int_\Omega\abs{k_t(x,y)}^{q'}\d y\biggr)^{\tfrac{p}{q'}}\d x\biggr)^{\tfrac{1}{p}}
		\norm{\sH_t(u)-\sH_t(\bar u)}_q\\
		& \stackrel{\eqref{cormag2}}{\leq}
		\biggl(\int_\Omega\biggl(\int_\Omega\abs{k_t(x,y)}^{q'}\d y\biggr)^{\tfrac{p}{q'}}\d x\biggr)^{\tfrac{1}{p}}
		\biggl(\int_\Omega\lambda_t(y)^{\tfrac{p}{p-q}}\d y\biggr)^{\tfrac{p-q}{p}}
		\norm{u-\bar u}_p\\
		& \stackrel{\eqref{no63}}{\leq}
		L\norm{u-\bar u}_p
		\fall u,\bar u\in L_d^p(\Omega).
	\end{align*}
	Concerning the nonlinearities $\bar\sN_t$ we obtain from the $X(\Omega)$-smoothing property assumed in Hypothesis (i) that $\sF_t=\sK_t\sG_t$ maps into $X(\Omega)$ and thus for $t\in\I'$ that
	\begin{align*}
		\norm{\bar\sN_t(u)-\bar\sN_t(\bar u)}_p
		&=
		\tfrac{1}{\Gamma(\tfrac{1}{n})}\norm{[\Pi_n-\id_{L_d^p(\Omega)}][\sF_t(u)-\sF_t(\bar u)]}_p
		\stackrel{\eqref{thmpifb4}}{\leq}
		\norm{\sF_t(u)-\sF_t(\bar u)}_{X(\Omega)}\\
		&\stackrel{\eqref{nosmooth}}{\leq}
		C_t\norm{\sG_t(u)-\sG_t(\bar u)}_q
		\stackrel{\eqref{cormag2}}{\leq}
		C_t\biggl(\int_\Omega\bar\lambda_t(y)^{\tfrac{p}{p-q}}\d y\biggr)^{\tfrac{p-q}{p}}
		\norm{u-\bar u}_p
	\end{align*}
	for all $u,\bar u\in L_d^p(\Omega)$, yielding $(H2)$ with $\bar L:=\sup_{t\in\I'}C_t\biggl(\int_\Omega\bar\lambda_t(y)^{\tfrac{p}{p-q}}\d y\biggr)^{\tfrac{p-q}{p}}<\infty$. We now choose $N\in\N$ so large that $\bar L\Gamma(\tfrac{1}{n})\leq L$ holds for all $n\geq N$, which ensures the inclusion $\Gamma(\tfrac{1}{n})\in\Theta:=\set{\theta\in\R:\,\bar L\abs{\theta}\leq L}$. 

	\emph{ad $(H3)$}: From the Chain Rule, for $1<\ell\leq m$ we directly conclude the derivatives
	\begin{align*}
		D^\ell\sN_t(u)
		&=
		\sK_tD^\ell\sG_t(u),&
		D^\ell\bar\sN_t(u)
		&=
		\tfrac{1}{\Gamma(\tfrac{1}{n})}[\Pi_n-\id_{L_d^p(\Omega)}]
		\sK_tD^\ell\sG_t(u)
		\fall t\in\I'
	\end{align*}
	and $u\in L_d^p(\Omega)$. If $q_\ell:=\tfrac{pq}{p-\ell q}$, then it is a consequence of \pref{lemdiff}(b) and 
	\begin{align*}
		\norm{D^\ell\sN_t(u)}_{L_\ell(L_d^p(\Omega))}
		&\leq
		\norm{\sK_t}_{L(L_n^q(\Omega),L_d^p(\Omega))}\norm{D^\ell\sG_t(u)}_{L_\ell(L_d^p(\Omega),L_n^q(\Omega))}\\
		&\leq
		\norm{\sK_t}_{L(L_n^q(\Omega),L_d^p(\Omega))}\intoo{\norm{c}_{q_\ell}+c_\ell\norm{u}_p^{q/q_\ell}}
	\end{align*}
	combined with $c_2=\ldots=c_m=0$ that the derivatives $D^\ell\sN_t$ are globally bounded (uniformly in $t\in\I'$). Similarly, for all $t\in\I'$ and $h_1,\ldots,h_\ell\in L_d^p(\Omega)$ one has
	\begin{align*}
		\norm{D^\ell\bar\sN_t(u)h_1\cdots h_\ell}_p
		&=
		\tfrac{1}{\Gamma(\tfrac{1}{n})}\norm{[\Pi_n-\id_{L_d^p(\Omega)}]D^\ell\sF_t(u)h_1\cdots h_\ell}_p\\
		&\stackrel{\eqref{thmpifb4}}{\leq}
		\norm{\sK_tD^\ell\sG_t(u)h_1\cdots h_\ell}_{X(\Omega)}
		\stackrel{\eqref{nosmooth}}{\leq}
		C_t\norm{D^\ell\sG_t(u)h_1\cdots h_\ell}_q\\
		&\leq
		C_t\intoo{\norm{c}_{q_\ell}+c_\ell\norm{u}_p^{q/q_\ell}}
		\norm{h_1}_p\cdots\norm{h_\ell}_p
		\fall 1<\ell\leq m
	\end{align*}
	and $u\in L_d^p(\Omega)$, which also yields the global boundedness of $D^\ell\bar\sN_t$ (uniformly in $t\in\I'$). After these preparations we can address the proof of statements (a--c): 
	
	(a) From \tref{thmifb}(a) we obtain that \eqref{eqtheta} has a $\gamma$-stable bundle $\cW^+(\theta)\subseteq\I\tm L_d^p(\Omega)$ as graph of a continuous function $w^+(\cdot;\theta):\I\tm L_d^p(\Omega)\to L_d^p(\Omega)$ for all $\theta\in\Theta$. Then 
	\begin{align*}
		\cW_{j,+}^0&:=\cW^+(0),&
		w_{j,+}^0&:=w^+(\cdot;0)|_{\cV_j^+},\\
		\cW_{j,+}^n&:=\cW^+(\Gamma(\tfrac{1}{n})),&
		w_{j,+}^n&:=w^+(\cdot;\Gamma(\tfrac{1}{n}))|_{\cV_j^+}\fall n\geq N,
	\end{align*}
	yield the $\gamma$-stable bundles of \eqref{ide0} and \eqref{iden} resp.\ the mappings parametrizing them; in particular, $w_{j,+}^n(\tau,0)\equiv 0$ on $\I$. \tref{thmifb}$(a_2)$ guarantees the Lipschitz conditions
	\begin{align*}
		\lip w_{j,+}^0(\tau,\cdot)
		&\stackrel{\eqref{noA3}}{\leq}
		\frac{K^2L}{\delta-2KL},&
		\lip w_{j,+}^n(\tau,\cdot)
		&\stackrel{\eqref{noA3}}{\leq}
		\frac{K^2(L+\Gamma(\tfrac{1}{n})\bar L)}{\delta-K(L+\Gamma(\tfrac{1}{n})\bar L)}
		\fall\tau\in\I,\,n\geq N
	\end{align*}
	and defining $C_0^+(0):=\tfrac{K^2L}{\delta-2KL}$, $C_0^+(n):=\tfrac{K^2(L+\Gamma(\tfrac{1}{n})\bar L)}{\delta-K(L+\Gamma(\tfrac{1}{n})\bar L)}$ yields $\lim_{n\to\infty}C_0^+(n)=C_0^+(0)$. Furthermore, the Lipschitz estimate \eqref{noA4} leads to 
	\begin{align*}
		\norm{w_{j,+}^n(\tau,v)-w_{j,+}^0(\tau,v)}_p
		&=
		\norm{w^+(\tau,v;\Gamma(\tfrac{1}{n}))-w^+(\tau,v;0)}_p\\
		&\leq
		\frac{2\delta K^3\bar L}{(\delta-4K\bar L)^2}\Gamma(\tfrac{1}{n})\norm{v}_p
		\fall(\tau,v)\in\cV_j^+,\,n\geq N.
	\end{align*}
	If we set $N_0^+:=\tfrac{2\delta K^3\bar L}{(\delta-4K\bar L)^2}$, then this establishes the assertion $(a_1)$. Under the additional assumptions imposed for $(a_2)$ we obtain from \tref{thmifb}$(a_3)$ that the partial derivatives $D_3D_2^\ell w_+$ exist for $0\leq\ell<m_+$. The Mean Value Estimate implies
	\begin{align*}
		\norm{D_2^\ell w_{j,+}^n(\tau,v)-D_2^\ell w_{j,+}^0(\tau,v)}_p
		&=
		\norm{D_2^\ell w^+(\tau,v;\Gamma(\tfrac{1}{n}))-D_2^\ell w^+(\tau,v;0)}_p\\
		&\leq
		\sup_{\theta\in\Theta}\norm{D_3D_2^\ell w^+(\tau,v;\theta)}_p\Gamma(\tfrac{1}{n})
		\stackrel{\eqref{thmifb6}}{\leq}
		N^+\norm{v}_p\Gamma(\tfrac{1}{n})
	\end{align*}
	for all $(\tau,v)\in\cV_j^+$ and $n\geq N$ implying the error estimate in $(a_2)$. 

	(b) This results parallel to (a) using \tref{thmifb}(b). In particular, $\cW_{j,-}^0:=\cW^-(0)$ and $\cW_{j,-}^n:=\cW^-(\Gamma(\tfrac{1}{n}))$ are graphs over the linear bundle $\cV_j^-$. Since the right-hand sides of \eqref{ide0} are completely continuous due to \pref{prophammer}, we obtain from \cite[p.~89, Prop.~6.5]{martin:76} that the derivatives in the variational equation \eqref{var0} are compact operators in $L(L_d^p(\Omega))$. Therefore, \cite[Cor.~4.13]{russ:16} implies that $\cV_j^-$ is finite-dimensional, which transfers to $\cW_{j,-}^n$ for $n=0$ and $n\geq N$. Moreover, due to the invariance $\cW_{j,-}^n(t+1)=\sF_t^n(\cW_{j,-}^n(t))\subseteq X_n^d$ for all $t\in\I'$ one has $\cW_{j,-}^n\subseteq\I\tm X_n^d$ for $n\geq N$ and similarly the smoothing property required in Hypothesis (i) implies $\cW_{j,-}^0\subseteq\I\tm X(\Omega)$. 

	(c) is a direct consequence of \tref{thmifb}(c). 
\end{proof}
\subsection{Pseudo-center bundles}
Complementing \tref{thmpifb}, we next establish that also the intersections 
$$
	\cV_{i,j}:=\cV_i^+\cap\cV_j^-\quad\text{for }i<j
$$
and thus the spectral bundles $\cV_j=\cV_{j-1,1}$ of \eqref{var0} persist under nonlinear perturbations, as well as spatial discretization as invariant bundles. 
\begin{theorem}[pseudo-center bundles under discretization]\label{thmcenter}
	Let Hypotheses (i--iii) hold with $\I=\Z$, $0<\gamma_j<\gamma_i$, $i,j\in\N_0$, be as in $(S_1)$ or $(S_2)$, choose $\alpha_k<\beta_k$ so that
	\begin{align*}
		\alpha_k&<\gamma_k<\beta_k,&
		(\alpha_k,\beta_k)\cap\Sigma_\Z&=\emptyset
		\quad\text{ for }k\in\set{i,j}
	\end{align*}
	and let $\cV_i^+,\cV_j^-$ denote the associated linear bundles of the variational equation \eqref{var0}. Then there are $K_i,K_j\geq 1$ depending only on \eqref{var0}, so that if $K_{\max}:=\!\displaystyle\max_{k\in\set{i,j}}(K_k^2+2K_k)$, 
	\begin{align}
		L&<\frac{\delta_{\max}}{2K_{\max}},&
		\delta_{\max}&:=\min_{k\in\set{i,j}}\frac{\beta_k-\alpha_k}{2}, 
		\label{thmcenter1}
	\end{align}
	$\delta\in(4K_{\max}L,\delta_{\max}]$ is fixed and 
	$\Gamma_k:=[\alpha_k+\delta,\beta_k-\delta]$ for $k\in\set{i,j}$, then there exists a $N\in\N$ such that for $n=0$ and $n\geq N$ the \emph{pseudo-center bundle}
	\begin{align*}
		\cW_{i,j}^n
		&:=
		\cW_{i,+}^n\cap\cW_{j,-}^n\\
		&=
		\set{(\tau,u)\in\Z\tm L_d^p(\Omega):
		\begin{array}{l}
		\sup_{\tau\leq t}\gamma_i^{\tau-t}\norm{\vphi^n(t;\tau,u)}_p<\infty\text{ and there}\\
		\text{exists a solution }(\phi_t)_{t\in\Z}\text{ of \eqref{iden} such}\\
		\text{that }\sup_{t\leq\tau}\gamma_j^{\tau-t}\norm{\phi_t}_p<\infty\text{ and }\phi_\tau=u
		\end{array}
		}
	\end{align*}
	is a finite-dimensional (unless for spectra $(S_1^1)$ and $j=J$) invariant bundle of \eqref{iden}, which is independent of $\gamma_i\in\Gamma_i$, $\gamma_j\in\Gamma_j$ and allows the representation
	$$
		\cW_{i,j}^n
		=
		\set{(\tau,v+w_{i,j}^n(\tau,v))\in\Z\tm L_d^p(\Omega):(\tau,v)\in\cV_{i,j}}
		\subseteq
		\begin{cases}
			\Z\tm X(\Omega),&n=0,\\
			\Z\tm X_n^d,&n\geq N
		\end{cases}
	$$
	with continuous mappings $w_{i,j}^n:\cV_{i,j}\to L_d^p(\Omega)$ satisfying $w_{i,j}^n(\tau,0)=0$ and 
	\begin{itemize}
		\item[(a)] there exist $C_0(n),N_0>0$ with $\lim_{n\to\infty}C_0(n)=\max_{k\in\set{i,j}}\frac{K_k^2L}{\delta-(K_k^2+2K_k)L}=:C_0(0)$ and for all $(\tau,v)\in\cV_{i,j}$ holds
		\begin{align}
			\lip w_{i,j}^n(\tau,\cdot)&\leq C_0(n),&
			\norm{w_{i,j}^n(\tau,v)-w_{i,j}^0(\tau,v)}_p
			&\leq
			N_0\Gamma(\tfrac{1}{n})\norm{v}_p,
			\label{thmcenter4}
		\end{align}

		\item[(b)] if $\alpha_i^m<\beta_i$, $\alpha_j<\beta_j^m$ and 
		$$
			\delta_{\max}
			:=
			\min\Bigl\{
			\tfrac{\beta_i-\alpha_i}{2},
			\tfrac{\beta_j-\alpha_j}{2},
			\alpha_i\bigl(\sqrt[m]{\tfrac{\alpha_i+\beta_i}{\alpha_i+\alpha_i^m}}-1\bigr),
			\beta_j\bigl(1-\sqrt[m]{\tfrac{\alpha_j+\beta_j}{\beta_j+\beta_j^m}}\bigr)
			\Bigr\},
		$$
		then the derivatives $D_2^\ell w_{i,j}^n:\cV_{i,j}\to L_\ell(L_d^p(\Omega))$ exist up to order $\ell\leq m$ as continuous maps and for $1\leq\ell<m$ there exist $N_\ell>0$ such that
		\begin{align*}
			\norm{D_2^\ell w_{i,j}^n(\tau,v)-D_2^\ell w_{i,j}^0(\tau,v)}_{L_\ell(L_d^p(\Omega))}
			&\leq
			N_\ell\Gamma(\tfrac{1}{n})\norm{v}_p
			\fall(\tau,v)\in\cV_{i,j}. 
		\end{align*}
	\end{itemize}
\end{theorem}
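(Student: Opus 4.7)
The plan is to mirror the structure of the proof of \tref{thmpifb}: set up the same parametrized semilinear equation \eqref{eqtheta} with nonlinearities $\sN_t$ and $\bar\sN_t$, verify hypotheses $(H1)$--$(H3)$ of the perturbation theorem \ref{thmifb}, and extract the pseudo-center bundle as a graph over $\cV_{i,j}$. The crucial novelty is that $(H1)$ must be verified simultaneously for both rates $\gamma_i$ and $\gamma_j$, producing two invariant projectors whose ranges are $\cV_i^+$ and whose kernels are $\cV_j^-$, respectively, with dichotomy constants $K_i, K_j \geq 1$. The estimates on $\sN_t$ and $\bar\sN_t$ established in the proof of \tref{thmpifb} carry over unchanged and give the same bounds $L$ and $\bar L$; the threshold $N \in \N$ is again chosen so that $\bar L\,\Gamma(\tfrac{1}{n}) \leq L$ for $n \geq N$, which guarantees $\Gamma(\tfrac{1}{n}) \in \Theta$.

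Next I would realize $\cW_{i,j}^n$ as a graph in two stages. \tref{thmpifb}(b) applied at rate $\gamma_j$ produces the finite-dimen\-sio\-nal invariant pseudo-unstable bundle $\cW_{j,-}^n$ with fibers contained in $X_n^d$ (resp.\ $X(\Omega)$). Restricting \eqref{iden} to $\cW_{j,-}^n$ yields a reduced nonautonomous IDE on the finite-dimensional bundle $\cV_j^-$ via the diffeomorphism $v \mapsto v + w_{j,-}^n(\cdot,v)$. The dichotomy spectrum of the reduced linearization coincides with $\Sigma_\Z \cap (\gamma_j,\infty)$, so the rate $\gamma_i$ falls in a spectral gap of the reduced system, and its associated $\gamma_i$-stable linear bundle is precisely $\cV_i^+ \cap \cV_j^- = \cV_{i,j}$. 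A second application of \tref{thmpifb}(a) (or, equivalently, of the same appendix theorem with the projectors obtained from the reduced dichotomy) then produces a continuous map $\hat w_{i,j}^n : \cV_{i,j} \to \cV_j^-$ whose graph is the $\gamma_i$-stable bundle of the reduced system; composing with the parametrization of $\cW_{j,-}^n$ yields the desired $w_{i,j}^n : \cV_{i,j} \to L_d^p(\Omega)$. The identity $\cW_{i,j}^n = \cW_{i,+}^n \cap \cW_{j,-}^n$ is then a direct consequence of the two characterizations by growth rates and the uniqueness clause \tref{thmpifb}(c).

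The quantitative bounds in (a) follow by composing the Lipschitz and discretization-error estimates from \tref{thmpifb}$(a_1)$ and $(b_1)$: the constants $C_0(n)$ and $N_0$ inherit the combined $K$-dependence via $K_{\max} = \max\{K_i^2+2K_i, K_j^2+2K_j\}$, and the limit $\lim_{n\to\infty} C_0(n) = \max_{k\in\{i,j\}} \tfrac{K_k^2 L}{\delta - (K_k^2+2K_k)L}$ is obtained just as in the proof of \tref{thmpifb}$(a_1)$, applied to both reduced systems. For (b) the $C^m$-regularity assertion requires the spectral gap conditions on both sides — hence the four-term minimum in the formula for $\delta_{\max}$ — and the derivative bounds follow by the Mean Value Estimate applied to $\theta \mapsto D_2^\ell w^c(\tau,v;\theta)$, as in the proof of \tref{thmpifb}$(a_2)$. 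Finite-dimensionality and the inclusions $\cW_{i,j}^n \subseteq \Z\tm X_n^d$ resp.\ $\Z\tm X(\Omega)$ are inherited from $\cW_{j,-}^n$ by \tref{thmpifb}(b).

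I expect the main obstacle to be the bookkeeping around the two-stage application: one needs to make sure that the dichotomy constants and Lipschitz bounds of the \emph{reduced} system on $\cW_{j,-}^n$ are controlled uniformly in $n$ and in the parameter $\theta$, so that the appendix theorem can be applied once more with constants depending only on the original data. Here the $C^1$-closeness of $w_{j,-}^n$ to $w_{j,-}^0$ from \tref{thmpifb}$(b_2)$ is essential to guarantee that the reduced linearization is a small perturbation of its $n=0$ counterpart, so that the spectral gap conditions and dichotomy constants transfer uniformly. A clean alternative — which I would pursue in parallel — is to invoke a pseudo-center version of the appendix perturbation theorem \tref{thmifb} directly on \eqref{eqtheta}, using both dichotomies at once; this bypasses the reduction step at the cost of invoking a slightly stronger form of the abstract result.
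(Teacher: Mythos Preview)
Your proposal is a valid alternative, but it takes a genuinely different route from the paper. The paper does \emph{not} reduce to $\cW_{j,-}^n$ and then apply \tref{thmpifb}(a) a second time on the reduced system. Instead, it obtains both graphs $w^+(\cdot;\theta):\cV_i^+\to L_d^p(\Omega)$ and $w^-(\cdot;\theta):\cV_j^-\to L_d^p(\Omega)$ from \tref{thmifb} separately, and then constructs $w(\cdot;\theta)$ parametrizing the intersection $\cW^+(\theta)\cap\cW^-(\theta)$ by a direct fixed-point argument: the map
\[
T_\tau(u,w,v,\theta):=\bigl(w^+(\tau,v+w;\theta),\,w^-(\tau,u+v;\theta)\bigr)
\]
is a uniform contraction in $(u,w)$ precisely because the strengthened bound \eqref{thmcenter1} forces both $\lip w^\pm(\tau,\cdot;\theta)<1$. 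The Lipschitz and derivative estimates in (a) and (b) are then obtained by differentiating the fixed-point identity $\Upsilon_\tau\equiv T_\tau(\Upsilon_\tau,\cdot,\cdot)$ (plus a reference to \cite[p.~208, Prop.~4.2.17]{poetzsche:10} for the graph representation and Lipschitz constant).

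Your reduction strategy is classical and could in principle be carried through, but it is heavier: you must verify $(H1)$--$(H3)$ for the \emph{reduced} equation on $\cW_{j,-}^n$, whose nonlinearity involves $w_{j,-}^n$ and its derivative, and do so uniformly in $n$ and $\theta$. You correctly flag this as the main obstacle. The paper's intersection-via-contraction approach sidesteps this entirely, since both $w^+$ and $w^-$ are built from the \emph{same} original equation \eqref{eqtheta} and the only new ingredient is the contraction property of $T_\tau$. A secondary point: the explicit formula $C_0(0)=\max_{k\in\{i,j\}}\tfrac{K_k^2 L}{\delta-(K_k^2+2K_k)L}$ in the statement comes out of the cited proposition; your composition-of-Lipschitz-constants argument would produce a different (and typically worse) expression, so at best you would recover statement (a) with some constant rather than the stated one. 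The ``clean alternative'' you mention at the end --- a pseudo-center version of \tref{thmifb} applied directly to \eqref{eqtheta} --- is essentially what the paper does, implemented inline via $T_\tau$ rather than as a separate abstract theorem.
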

The magnitude of $N$ is determined by the same factors as in \tref{thmpifb}. We furthermore point out spectra $(S_1^1)$ and $j=J$, in which $\cW_{i,J}^n=\cW_{i,+}^n$ for all $i<J$. 
\begin{remark}[extended hierarchy]
	For dichotomy spectra $\Sigma_\Z$ of the form $(S_1)$ choose $J\in\N$ as in \sref{sec22}, while for spectra $(S_2)$ let $J\in\N$ be arbitrary. Then it results from their dynamical characterization and \eqref{noA2} that the bundles constructed in Thms.~\ref{thmpifb} and \ref{thmcenter} satisfy the following inclusions denoted as \emph{extended hierarchy}
	$$
	\begin{array}{ccccccccccc}
		\cW_{J,+}^n & \subset & \cW_{J-1,+}^n & \subset & \ldots & \subset & \cW_{2,+}^n & \subset & \cW_{1,+}^n & \subset & \Z\tm L_d^p(\Omega)\\
		&&\cup&&&&\cup&&\cup&&\cup\\
		&&\cW_{J-1,J}^n&\subset&\ldots&\subset&\cW_{2,J}^n&\subset&\cW_{1,J}^n&\subset&\cW_{J,-}^n\\
		&&&&&&&&\cup&&\cup\\
		&&&&&\ddots&&&\vdots&&\vdots\\
		&&&&&&&&\cup&& \cup\\
		&&&&&&&&\cW_{1,2}^n&\subset & \cW_{2,-}^n\\
		&&&&&&&&&&\cup\\
		&&&&&&&&&& \cW_{1,-}^n.
	\end{array}
	$$
\end{remark}

\begin{figure}[ht]
	\includegraphics[scale=0.5]{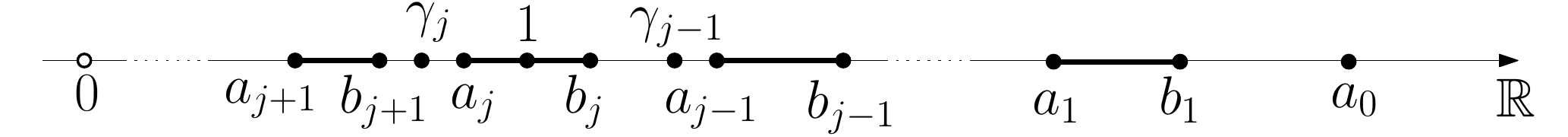}
	\caption{Nonhyperbolic situation where $1$ is contained in the spectral interval $[a_j,b_j]$}
	\label{fig5}
\end{figure}
\begin{remark}[persistence and center bundles]
	In a \emph{nonhyperbolic} situation $1\in[a_j,b_j]$ for some $1<j<J$ (see \fref{fig5}), 
	$\cW_s=\cW_{j,+}^0$ is the stable, 
	$\cW_{cs}:=\cW_{j-1,+}^0$ the \emph{center-stable}, 
	$\cW_{cu}:=\cW_{j,-}^0$ the \emph{center-unstable} and 
	$\cW_u=\cW_{j-1,-}^0$ the unstable bundle of \eqref{ide0}, while the \emph{center bundle} $\cW_c:=\cW_{j-1,j}^0$ completes the \emph{classical hierarchy} (cf.~\eqref{chier}). Now a center bundle, by definition, is graph over the spectral bundle $\cV_j$ associated to the spectral interval containing $1$. 
	Although the center bundle $\cW_c$ of \eqref{ide0} persists as invariant bundles $\cW_{j-1,j}^n$ for all $n\geq N$, in general these sets $\cW_{j-1,j}^n$ fail to be center bundles of the discretizations \eqref{iden}. This is due to the fact that $1$ needs not to be contained in the dichotomy spectrum $\Sigma_\Z^n$ of the discretized variational equations \eqref{varn}. For instance, one can think of a singleton interval $[a_j,b_j]=\set{1}$ moving away from $1$ under discretization or the fact that a solid spectral interval $[a_j,b_j]$ splits into subintervals, which is possible by the upper-semicontinuity of the spectrum $\Sigma_\Z$. 
\end{remark}

\begin{proof}[Proof of \tref{thmcenter}]
	Above all, choose $i<j$ fixed and we stay in the terminology established in the proof of \tref{thmpifb}. 

	(I) Our present assumptions yield that \tref{thmifb} applies to IDEs \eqref{eqtheta} in $X=L_d^p(\Omega)$. Therefore, for each $\theta\in\Theta$ there exists a $\gamma_i$-stable bundle $\cW^+(\theta)$ being a graph of a mapping $w^+(\cdot;\theta):\cV_i^+\to L_d^p(\Omega)$ and a $\gamma_j$-unstable bundle $\cW^-(\theta)$ represented as graph of $w^-(\cdot;\theta):\cV_j^-\to L_d^p(\Omega)$. The fact that their intersection can be represented as graph of a mapping $w(\cdot;\theta):\I\tm L_d^p(\Omega)\to L_d^p(\Omega)$ over $\cV:=\cV_{i,j}$ is shown as in the proof of \cite[p.~208, Prop.~4.2.17]{poetzsche:10}. Hence, it remains to establish the convergence statements. For this purpose, we choose $\theta\in\Theta$ and obtain from \tref{thmifb} that
	\begin{align*}
		\lip w^+(\tau,\cdot;\theta)&\stackrel{\eqref{noA3}}{\leq}\frac{2K_i^2L}{\delta-4K_iL}<1,&
		\lip w^-(\tau,\cdot;\theta)&\stackrel{\eqref{noA7}}{\leq}\frac{2K_j^2L}{\delta-4K_jL}<1
		\fall\tau\in\Z
	\end{align*}
	due to our strengthened assumptions \eqref{thmcenter1} (in comparison to \eqref{nosmall}); for this note that $\abs{\theta}\bar L\leq L$ due to $\theta\in\Theta$. This implies that
	\begin{align*}
		T_\tau:L_d^p(\Omega)^2\tm L_d^p(\Omega)\tm\Theta&\to L_d^p(\Omega)^2,&
		T_\tau(u,w,v,\theta)&:=
		\begin{pmatrix}
			w^+(\tau,v+w;\theta)\\
			w^-(\tau,u+v;\theta)
		\end{pmatrix}
	\end{align*}
	is a contraction in its first two arguments $(u,w)\in L_d^p(\Omega)^2$, uniformly in the parameters $(v,\theta)\in L_d^p(\Omega)\tm\Theta$, with the Lipschitz constant $\max_{k\in\set{i,j}}\frac{2K_k^2L}{\delta-4K_kL}<1$. Based on its unique fixed point $(\Upsilon_\tau^+,\Upsilon_\tau^-)(v,\theta)\in L_d^p(\Omega)^2$ we define the mapping
	\begin{equation}
		w(\tau,v;\theta):=\Upsilon_\tau^+(v,\theta)+\Upsilon_\tau^-(v,\theta)
		\fall\tau\in\I,\,v\in L_d^p(\Omega),\,\theta\in\Theta, 
		\label{nowest}
	\end{equation}
	depending only on the projection of $v\in L_d^p(\Omega)$ onto $\cV(\tau)$. Given $\theta,\bar\theta\in\Theta$ and with $\Upsilon_\tau:=(\Upsilon_\tau^+,\Upsilon_\tau^-)$, using the Lipschitz estimates \eqref{noA4}, \eqref{noA8} from \tref{thmifb} we obtain as in the proof of \cite[Thm.~4.2]{poetzsche:03} that there exists a constant $\tilde N_0\geq 0$ such that
	\begin{equation}
		\norm{\Upsilon_\tau(v,\theta)-\Upsilon_\tau(v,\bar\theta)}
		\leq
		\tilde N_0\norm{v}_p\abs{\theta-\bar\theta}
		\fall(\tau,v)\in\cV. 
		\label{no17s}
	\end{equation}
	Referring to \eqref{nowest} this guarantees 
	\begin{equation}
		\norm{w(\tau,v;\theta)-w(\tau,v;\bar\theta)}_p
		\leq
		2\tilde N_0\norm{v}_p\abs{\theta-\bar\theta}\fall(\tau,v)\in\cV,\,\theta,\bar\theta\in\Theta. 
		\label{susy}
	\end{equation}

	(II) Now suppose that both spectral gap conditions $\alpha_i^m<\beta_i$, $\alpha_j<\beta_j^m$ hold yielding that $w^+(\tau,\cdot)$ and $w^-(\tau,\cdot)$ are of class $C^m$ with globally bounded partial derivatives of order $1\leq\ell\leq m$ w.r.t.\ $v$ due to \eqref{noA5} and \eqref{noA9}. In order to establish that there exist constants $\tilde N_\ell\geq 0$ such that
	\begin{equation}
		\norm{D_2D_1^\ell\Upsilon_\tau(v,\theta)}_{L_\ell(L_d^p(\Omega),L_d^p(\Omega)^2)}
		\leq
		\tilde N_\ell\norm{v}_p\fall(\tau,v)\in\cV,\,\theta\in\Theta,
		\label{star2}
	\end{equation}
	we proceed by mathematical induction. For $\ell=0$ the bound on $D_2\Upsilon_\tau$ results from the Lipschitz condition \eqref{no17s} combined with \cite[p.~363, Prop.~C.1.1]{poetzsche:10}. Differentiating the fixed-point identity $\Upsilon_\tau(v,\theta)\equiv T_\tau(\Upsilon_\tau(v,\theta),v,\theta)$ on $\cV\tm\Theta$ w.r.t.\ $\theta$ first implies
	\begin{equation}
		D_2\Upsilon_\tau(v,\theta)
		\equiv
		D_{(1,2)}T_\tau(\Upsilon_\tau(v,\theta),v,\theta)D_2\Upsilon_\tau(v,\theta)+D_4T_\tau(\Upsilon_\tau(v,\theta),v,\theta)
		\label{susy2}
	\end{equation}
	on $\cV\tm\Theta$ and then thanks to the Product and Chain Rule (with partially unfolded derivative tree) the derivatives of \eqref{susy2} w.r.t.\ $v$ of order $1\leq\ell<m$ read as
	\begin{align*}
		D_2D_1^\ell\Upsilon_\tau(v,\theta)
		\equiv&
		\underbrace{D_{(1,2)}T_\tau(\Upsilon_\tau(v,\theta),v,\theta)}_{\norm{\cdot}<1}D_2D_1^\ell\Upsilon(v,\theta)\\
		&+
		D_v^\ell\intoo{
		D_{(1,2)}T_\tau(\Upsilon_\tau(v,\theta),v,\theta)D_2\Upsilon(v,\theta)
		+
		D_4T_\tau(\Upsilon_\tau(v,\theta),v,\theta)}
	\end{align*}
	on $\cV\tm\Theta$. Here, the norm bound on $D_{(1,2)}T_\tau(\Upsilon_\tau(v,\theta),v,\theta)$ results from the contraction property of $T_\tau(\cdot,v,\theta)$ and again \cite[p.~363, Prop.~C.1.1]{poetzsche:10}. Then the second term in the above sum is a sum of products. In each of them a derivative of $\Upsilon_\tau$ w.r.t.\ $\theta$ occurs exactly once, which yields a factor containing the term $\norm{v}_p$ (note \eqref{thmifb6} and \eqref{thmifb6s}), while the remaining factors are derivatives of $\Upsilon_\tau$ w.r.t.\ $v$, being bounded due to our induction hypothesis (see \eqref{noA5} and \eqref{noA9}). This establishes \eqref{star2} from which one has
	\begin{equation}
		\norm{D_3D_2^\ell w(\tau,v;\theta)}_{L_\ell(L_d^p(\Omega),L_d^p(\Omega))}
		\leq
		2\tilde N_\ell\norm{v}_p\fall(\tau,v)\in\cV,\,\theta\in\Theta.
		\label{no22}
	\end{equation}

	(III) After the prelude in steps (I) and (II) we finally define
	\begin{align*}
		\cW_{i,j}^0&:=\cW^+(0)\cap\cW^-(0),&
		w_{i,j}^0&:=w(\cdot,0)|_{\cV},\\
		\cW_{i,j}^n&:=\cW^+(\Gamma(\tfrac{1}{n}))\cap\cW^-(\Gamma(\tfrac{1}{n})),&
		w_{i,j}^n&:=w(\cdot,\Gamma(\tfrac{1}{n}))|_{\cV}\fall n\geq N
	\end{align*}
	and argue as above, i.e.\ by referring to \cite[p.~208, Prop.~4.2.17]{poetzsche:10} that it remains to establish the convergence estimates given in (a) and (b) for $n\geq N$: 
	
	(a) Then the error estimate in \eqref{thmcenter4} results from step (I) as for $(\tau,v)\in\cV_{i,j}$ one has
	$$
		\norm{w_{i,j}^n(\tau,v)-w_{i,j}^0(\tau,v)}_p
		=
		\norm{w(\tau,v;\Gamma(\tfrac{1}{n}))-w(\tau,v;0)}_p
		\stackrel{\eqref{susy}}{\leq}
		2\tilde N_0\Gamma(\tfrac{1}{n})\norm{v}_p.
	$$

	(b) Using the Mean Value Estimate we derive from step (II) for $1\leq\ell<m$ that
	\begin{align*}
		\norm{D_2^\ell w_{i,j}^n(\tau,v)-D_2^\ell w_{i,j}^0(\tau,v)}_p
		&=
		\norm{D_2^\ell w(\tau,v;\Gamma(\tfrac{1}{n}))-D_2^\ell w(\tau,v;0)}_p\\
		&\leq
		\sup_{\theta\in\Theta}\norm{D_3D_2^\ell w(\tau,v;\theta)}\Gamma(\tfrac{1}{n})
		\stackrel{\eqref{no22}}{\leq}
		2\tilde N_\ell \Gamma(\tfrac{1}{n})\norm{v}_p
	\end{align*}
	for all $(\tau,v)\in\cV_{i,j}$. This establishes claim (b). 
\end{proof}
\subsection{Applicability}
\label{seccutoff}
One cannot expect general IDEs 
\begin{align}
	\tag{$\tilde I_0$}
	u_{t+1}&=\tilde\sF_t(u_t),&
	\tilde\sF_t(u)&:=\int_\Omega k_t(\cdot,y)\tilde g_t(y,u(y))\d y, 
	\label{ide2}
\end{align}
arising in real-world applications \cite{jacobsen:jin:lewis:15,lutscher:19} to satisfy global assumption such as \eqref{no7s}, \eqref{no62}, the global smallness condition \eqref{nosmall}, or that the trivial solution lies in the center of interest. Nonetheless, given any solution $(\phi_t)_{t\in\I}$ of \eqref{ide2} the \emph{equation of perturbed motion}
$$
	u_{t+1}
	=
	\tilde\sF_t(u_t+\phi_t)-\tilde\sF_t(\phi_t)
	=
	\int_\Omega k_t(\cdot,y)\bar g_t(y,u_t(y))\d y
$$
with growth function $\bar g_t(x,z):=\tilde g_t(x,z+\phi_t(x))-\tilde g_t(x,\phi_t(x))$ has the trivial solution. The dynamics in its vicinity is the same as that of \eqref{ide2} near the reference solution $\phi$. 

The solutions of \eqref{ide2} being relevant in applications have (essentially) bounded values $\phi_t$. Under this premise a suitable modification of the growth function $\bar g_t:\Omega\tm\K^d\to\K^n$ allows to apply \tref{thmpifb} and \ref{thmcenter}. For this purpose assume that $\bar g_t$ satisfies Hypothesis~(ii) with the global conditions \eqref{no7s} and \eqref{no62} weakened to
\begin{itemize}
	\item $\lim_{z\to 0}\esup_{x\in\Omega}\abs{D_2\tilde g_t(x,z+\phi_t(x))-D_2\tilde g_t(x,\phi_t(x))}=0$ for all $t\in\I'$, 

	\item for each $r>0$ and $t\in\I'$ there exists a $\Lambda_t(r)\geq 0$ such that
	$$
		\abs{\tilde g(x,z)-\tilde g(x,\bar z)}
		\leq
		\Lambda_t(r)\abs{z-\bar z}
		\fall z,\bar z\in\bar B_r(\phi_t(x))\text{ and $\mu$-a.a.\ }x\in\Omega.
	$$
\end{itemize}
Given $\rho>0$, let $\chi_\rho:\K^d\to[0,1]$ denote a \emph{cut-off function}, i.e.\ a $C^\infty$-function constant $\chi_\rho(z)\equiv 1$ on $\bar B_\rho(0)$, $\chi_\rho(z)\in(0,1)$ for $z\in B_{2\rho}(0)\setminus\bar B_\rho(0)$ and vanishing $\chi_\rho(z)\equiv 0$ on the remaining set $\K^d\setminus B_{2\rho}(0)$ (cf.\ \cite[p.~369, Prop.~C.2.16]{poetzsche:10}). First, thanks to this construction the growth functions $\bar g_t$ and
\begin{equation}
	g_t(x,z):=\bar g_t(x,\chi_\rho(z)z)=\tilde g_t(x,\chi_\rho(z)z+\phi_t(x))-\tilde g_t(x,\phi_t(x))
	\label{gmod}
\end{equation}
coincide for all $z\in\bar B_\rho(0)$ and a.a.\ $x\in\Omega$. Second, by \cite[p.~370, Prop.~C.2.17]{poetzsche:10} the radius $\rho>0$ can be chosen so small that under the balancing conditions \eqref{no7s}, \eqref{no62} the resulting nonlinearities $\sN_t,\bar\sN_t:L_d^p(\Omega)\to L_d^p(\Omega)$ in the proof of \tref{thmpifb} satisfy the assumptions of \tref{thmifb}. This allows to obtain the particular fiber bundles $\cW_{i,+}^n,\cW_{j,-}^n$ and $\cW_{i,j}^n$ of the IDE \eqref{ide0}, whose growth function $g_t$ is given in \eqref{gmod}. 

In order to transfer this information back to the original IDE \eqref{ide2} we note that the dynamics of \eqref{ide2} in the vicinity $\{(t,u)\in\I\tm L_d^p(\Omega):\,\norm{u-\phi_t}_\infty\leq\rho\}$ of the reference solution $\phi$ and the behavior of the modified problem \eqref{ide0} in the nonautonomous set $\{(t,u)\in\I\tm L_d^p(\Omega):\,\norm{u}_\infty\leq\rho\}$ are related by a translation via the solution $\phi$. 

\section{Galerkin discretizations of integrodifference equations}
\label{sec4}
This section demonstrates that the convergence assumption from Hypothesis (iii) holds for various Galerkin approximations. They are a special case of general projection methods (cf.~\cite[Sect.~2]{atkinson:92} or \cite[pp.~448ff]{atkinson:han:00}) to discretize IDEs spatially. Here, let $a<b$. 
\subsection{Galerkin methods on $\Omega=(a,b)$}
On the interval $\Omega:=(a,b)$, choose $n\in\N_0$, the nodes
\begin{align}
	a&=x_0^n<x_1^n<\ldots<x_{m_n}^n=b
	\label{nonodes}
\end{align}
and set $m_0:=1$ (i.e.\ one obtains the entire interval $(a,b)$ for $n=0$), 
\begin{align*}
	h_n&:=\max_{i=1}^{m_n}(x_i^n-x_{i-1}^n),&
	I_i^n&:=[x_{i-1}^n,x_i^n]\fall 1\leq i\leq m_n. 
\end{align*}
We assume throughout that there exists a $C>0$ such that
\begin{align}
	h_n\leq\tfrac{C}{n}\fall n\in\N.
	\label{star}
\end{align}
\paragraph*{Piecewise constant approximation}
Let $\chi_{I_i^n}:\R\to\set{0,1}$, $1\leq i\leq m_n$, be the characteristic functions over the partition \eqref{nonodes}. With $X_n:=\spann\bigl\{\chi_{I_1^n},\ldots,\chi_{I_{m_n}^n}\bigr\}$ one defines the projections (see \cite[p.~305, 9.21]{alt:16})
$$
	\Pi_nu
	:=
	\sum_{i=1}^{m_n}\chi_{I_i^n}(\cdot)
	\intoo{\frac{1}{x_i^n-x_{i-1}^n}\int_{x_{i-1}^n}^{x_i^n}u(x)\d x}
	\fall u\in L_d^p(a,b)
$$
onto the piecewise constant functions $X_n^d$ and has the error estimate
$$
	\norm{[\id_{L_d^p(a,b)}-\Pi_n]u}_p
	\leq
	h_n\norm{u'}_p
	\fall n\in\N,\,u\in W_d^{1,p}(a,b). 
$$
This yields a discretization error \eqref{thmpifb4} with the space of weakly differentiable functions $X(a,b)=W_d^{1,p}(a,b)$ having first order derivatives in $L_d^p(a,b)$ and $\Gamma(\varrho)=C\varrho$. 
\paragraph*{Continuous piecewise linear approximation}
Let $\chi_i^n:\R\to[0,1]$, $0\leq i\leq m_n$, denote the hat functions over the partition \eqref{nonodes}. For $X_n:=\spann\bigl\{\chi_0^n,\ldots,\chi_{m_n}^n\bigr\}$ and continuous functions $u:(a,b)\to\K^d$ one defines the projections 
$$
	\Pi_nu(x)
	:=
	\sum_{i=1}^{m_n}\chi_{I_i^n}(x)
	\intoo{\frac{x_i^n-x}{x_{i^n}^n-x_{i-1}^n}u(x_{i-1}^n)
	+
	\frac{x-x_{i-1}^n}{x_{i^n}^n-x_{i-1}^n}u(x_i^n)}
	\fall u\in C_d(a,b)
$$
onto $X_n^d$. First, for each $l\in\set{1,2}$ one has the error estimate (cf.\ \cite[p.~309, 9.22]{alt:16})
$$
	\norm{[\id_{L_d^p(a,b)}-\Pi_n]u}_p
	\leq
	2h_n^l\norm{u^{(l)}}_p
	\fall n\in\N,\,u\in W_d^{l,p}(a,b). 
$$
Hence, \eqref{thmpifb4} holds with $X(a,b)=W_d^{l,p}(a,b)$ being the space of functions whose weak derivatives exist up to order $l\leq 2$ in $L_d^p(a,b)$ and $\Gamma(\varrho)=2C^l\varrho^l$. 

Second, for $\alpha\in(0,1]$ and $\alpha$-H\"older functions $u:[a,b]\to\K^d$ one establishes the error estimate (s.\ \cite[p.~457, (12.2.4)]{atkinson:han:00})
$$
	\norm{[\id_{L_d^p(a,b)}-\Pi_n]u}_p
	\leq
	(b-a)^{1/p}d^{1/p}h_n^\alpha\norm{u}_{C_d^\alpha[a,b]}
	\fall n\in\N,\,u\in C_d^\alpha[a,b]. 
$$
The condition \eqref{thmpifb4} is fulfilled with the $\K^d$-valued functions $X(a,b)=C_d^\alpha[a,b]$ satisfying a H\"older condition with exponent $\alpha$ and $\Gamma(\varrho)=(b-a)^{1/p}d^{1/p}C^\alpha\varrho^\alpha$. 
\subsection{Galerkin methods on polygonal domains $\Omega\subset\R^\kappa$ with $\kappa=2,3$}
Let $\kappa=2,3$ and $\Omega\subset\R^\kappa$ be a polygonal domain (open, bounded, connected). For a sequence $(h_n)_{n\in\N}$ in $(0,\infty)$ satisfying \eqref{star} suppose the representations
$$
	\Omega=\bigcup_{K\in\fT_n}K\fall n\in\N
$$
with a sequence of regular triangulations $\fT_n$, i.e.\ there exists a constant $T>0$ such that each family $\fT_n$ consists of finitely many polyhedra $K\subseteq\Omega$ and
\begin{align*}
	\max_{K\in\fT_n}\diam K&\leq h_n,&
	\max_{K\in\fT_n}\frac{\diam K}{\sup\set{\diam S:\,S\text{ is a ball contained in }K}}\leq T
\end{align*}
for $n\in\N$. Let $P_k(K)$ be the space of polynomials of (maximal) degree $k\in\N$ in variables $x_1,\ldots,x_\kappa\in\R$ over $K$ and $X_n:=\{v\in C(\Omega):\,v|_K\in P_k(K)\text{ for all }K\in\fT_n\}$. For $1<l\leq k+1$ it results from \cite[p.~96, (3.5.9)]{quarteroni:valli:94} that there exists a $\bar C>0$ with
$$
	\norm{[\id_{L_d^2(\Omega)}-\Pi_n]u}_2
	\leq
	\bar Ch_n^l
	\sqrt{\sum_{\abs{\alpha}=l}\norm{D^\alpha u}_2^2}
	\fall n\in\N,\,u\in H_d^l(\Omega). 
$$
Consequently the error estimate \eqref{thmpifb4} holds for functions $X(\Omega)=H_d^l(\Omega)$ being weakly differentiable up to order $l\leq k+1$ with derivatives in $L_d^2(\Omega)$ and $\Gamma(\varrho)=\bar CC^l\varrho^l$.
\subsection{Spectral Galerkin methods}
For a Hammerstein IDE \eqref{ide0} let us suppose that the kernels and growth functions are of the form $k_t=k$ and $g_t(x,z):=\alpha_tg(x,z)$ for all $t\in\I'$ with a sequence $\alpha_t\in\R$ and functions $k:\Omega\tm\Omega\to\K^{d\tm n}$, $g:\Omega\tm\K^d\to\K^n$ such that Hypotheses (i--iii) are satisfied with $p=2$. The linear operator $\sL\in L(L_d^2(\Omega))$, $\sL u:=\int_\Omega k(\cdot,y)D_2g(y,0)u(y)\d y$ is compact and in case
$$
	k(x,y)D_2g(y,0)=[k(y,x)D_2g(x,0)]^\ast\quad\text{for a.a.\ }x,y\in\Omega
$$
also self-adjoint. Consequently there exists a complete orthonormal set consisting of eigenfunctions $\chi_i:\Omega\to\K^d$ for $\sL$. Given this, we define the orthogonal projections
$$
	\Pi_nu:=\sum_{i=1}^n\iprod{u,\chi_i}\chi_i\fall n\in\N
$$
onto $X_n^d:=\spann\set{\chi_1,\ldots,\chi_n}$ and it is not difficult to arrive at
$$
	\norm{[\id_{L_d^2(\Omega)}-\Pi_n]u}_2
	\leq
	\sqrt{\sum_{j=n+1}^\infty\abs{\iprod{u,\chi_j}}^2}\fall n\in\N.
$$
Whence, the convergence function $\Gamma$ in the error estimate \eqref{thmpifb4} depends on the decay of the Fourier coefficients $\iprod{u,\chi_j}$. If the kernel comes from a Sturm-Liouville problem, then related results are treated in \cite[pp.~275, Sect.~5.2]{canuto:etal:06}. 
\section{Perspectives}
An alternative to the $L^p$-setting we elaborated on in this paper, is to employ the strictly decreasing scale of H\"older spaces $C^\alpha(\Omega,\K^d)$ over a compact habitat $\Omega\subset\R^\kappa$. For this purpose and exponents $0<\beta<\alpha\leq 1$ one considers 
\begin{itemize}
	\item Fredholm operators $\sK_t\in L(C^\beta(\Omega,\K^n),C^\alpha(\Omega,\K^d))$ as defined in \eqref{defK}, 

	\item Nemytskii operators $\sG_t:C^\alpha(\Omega,\K^d)\to C^\beta(\Omega,\K^n)$ as given in \eqref{defG}.
\end{itemize}
The reason for working with different H\"older exponents are the pathological mapping properties of the Nemytskii operators $\sG_t$ in case $\alpha=\beta$. Indeed, then any globally Lipschitz $\sG_t$ is already affine-linear, cf.~\cite{matkowski:09}. A particular feature of such a H\"older setting is that persistence and convergence of invariant fibers can be shown for Nystr\"om discretizations of \eqref{ide0} (see \cite{atkinson:92,poetzsche:20a}). 
\appendix
%
%
\section{Perturbation of invariant bundles}
\label{appA}
In this appendix, $\I$ is an unbounded discrete interval, while $(X,\norm{\cdot})$ denotes a Banach space. We consider non\-auto\-nomous semilinear difference equations
\begin{equation}
	\tag{$\Delta_\theta$}
	u_{t+1}=\sL_tu_t+\sN_t(u_t)+\theta\bar\sN_t(u_t)
	\label{fbdeq}
\end{equation}
in $X$ depending on a parameter $\theta\in\R$ and having the general solution $\vphi(\cdot;\theta)$. Suppose that the following assumptions hold: 
\begin{itemize}
	\item[\textbf{(H1)}] $\sL_t\in L(X)$, $t\in\I'$, there exists a projection-valued sequence $(P_t)_{t\in\I}$ in $L(X)$ and reals $K\geq 1$, $0<\alpha<\beta$ so that $P_{t+1}\sL_t=\sL_tP_t$ and $\sL_t|_{N(P_t)}:N(P_t)\to N(P_{t+1})$ is an isomorphism for all $t\in\I'$, as well as
	\begin{align}
		\norm{\Phi(t,s)P_s}&\leq K\alpha^{t-s},&
		\norm{\Phi(s,t)[\id_X-P_t]}&\leq K\beta^{s-t}\fall s\leq t. 
		\label{noh1}
	\end{align}

	\item[\textbf{(H2)}] The identities $\sN_t(0)\equiv \bar\sN_t(0)\equiv 0$ on $\I'$ hold for mappings $\sN_t,\bar\sN_t:X\to X$, $t\in\I'$, satisfying the Lipschitz estimates
	\begin{align*}
		L&:=\sup_{t\in\I'}\lip \sN_t<\infty,&
		\bar L&:=\sup_{t\in\I'}\lip\bar\sN_t<\infty.
	\end{align*}

	\item[\textbf{(H3)}] Let $m\in\N$ and $\sN_t,\bar\sN_t$ be $m$-times continuously differentiable such that
	\begin{align*}
		\hspace*{-10mm}
		\sup_{(t,u)\in\I'\tm X}\norm{D^\ell\sN_t(u)}_{L_\ell(X)}&<\infty,&
		\sup_{(t,u)\in\I'\tm X}\norm{D^\ell\bar\sN_t(u)}_{L_\ell(X)}&<\infty
		\fall 1<\ell\leq m.
	\end{align*}
\end{itemize}

For our central perturbation result, given $\tau\in\I$ and $\gamma>0$ the linear spaces
\begin{align*}
	\ell_{\tau,\gamma}^+&:=\set{(\phi_t)_{\tau\leq t}:\,\phi_t\in X\text{ and }\sup_{\tau\leq t}\gamma^{\tau-t}\norm{\phi_t}<\infty},\\
	\ell_{\tau,\gamma}^-&:=\set{(\phi_t)_{t\leq\tau}:\,\phi_t\in X\text{ and }\sup_{t\leq\tau}\gamma^{\tau-t}\norm{\phi_t}<\infty}
\end{align*}
and $\ell_\gamma:=\set{(\phi_t)_{t\in\Z}:\,\phi_t\in X\text{ and }\sup_{t\in\Z}\gamma^{-t}\norm{\phi_t}<\infty}$ are due and satisfy
\begin{align}
	\ell_{\tau,\gamma}^+&\subseteq\ell_{\tau,\bar\gamma}^+,&
	\ell_{\tau,\bar\gamma}^-&\subseteq\ell_{\tau,\gamma}^-\fall 0<\gamma\leq\bar\gamma.
	\label{noA2}
\end{align}

These preparations allows us to establish existence, smoothness and $\theta$-dependence of invariant bundles for \eqref{fbdeq} carrying most of the technical effort for the results above. 
\begin{theorem}[existence and perturbation of invariant bundles]\label{thmifb}
	Under the assumptions (H1--H2) with
	\begin{align*}
		L&<\frac{\delta_{\max}}{4K},&
		\delta_{\max}&:=\frac{\beta-\alpha}{2}
	\end{align*}
	we choose a fixed $\delta\in(4KL,\tfrac{\beta-\alpha}{2}]$ and set
	$\Gamma:=[\alpha+\delta,\beta-\delta]$, $\Theta:=\set{\theta\in\R:\,\bar L\abs{\theta}\leq L}$. If $\gamma\in\Gamma$, then the following statements are true for all $\theta\in\Theta$: 
	\begin{enumerate}
		\item[(a)] In case $\I$ is unbounded above, then the \emph{$\gamma$-stable bundle}
		$$
			\cW^+(\theta)
			:=
			\set{(\tau,u)\in\I\tm X:\,\vphi(\cdot;\tau,u;\theta)\in\ell_{\tau,\gamma}^+}
		$$
		is a forward invariant bundle of \eqref{fbdeq} independent of $\gamma\in\Gamma$ having the representation
		$
			\cW^+(\theta)
			=
			\set{(\tau,v+w^+(\tau,v;\theta))\in\I\tm X:\,v\in R(P_\tau)}
		$
		with a continuous mapping $w^+:\I\tm X\tm\Theta\to X$ satisfying for all $\tau\in\I$, $u\in X$ that
		\begin{itemize}
			\item[$(a_1)$] $w^+(\tau,0;\theta)=0$ and $w^+(\tau,u;\theta)=w^+(\tau,P_\tau u;\theta)\in N(P_\tau)$, 

			\item[$(a_2)$] $w^+:\I\tm X\tm\Theta\to X$ fulfills the Lipschitz estimates
			\begin{align}
				\lip w^+(\tau,\cdot;\theta)&\leq\frac{K^2(L+\abs{\theta}\bar L)}{\delta-2K(L+\abs{\theta}\bar L)},
				\label{noA3}\\
				\lip w^+(\tau,u;\cdot)&\leq\frac{2\delta K^3\bar L}{(\delta-4K\bar L)^2}\norm{u},
				\label{noA4}
			\end{align}

			\item[$(a_3)$] if also $(H3)$ holds, $\alpha^{m_+}<\beta$ and $\delta_{\max}:=\min\Bigl\{\tfrac{\beta-\alpha}{2},\alpha\bigl(\sqrt[{m_+}]{\frac{\alpha+\beta}{\alpha+\alpha^{m_+}}}-1\bigr)\Bigr\}$, then the derivatives $D_{(2,3)}^\ell w^+:\I\tm X\tm\Theta\to L_\ell(X\tm\R,X)$ exist up to order $\ell\leq m_+\leq m$ as continuous functions and there are $M^+,N^+>0$ such that
			\begin{align}
				\norm{D_2^\ell w^+(\tau,u;\theta)}_{L_\ell(X)}
				&\leq
				M^+\fall 1\leq\ell\leq m_+,
				\label{noA5}\\
				\norm{D_3D_2^\ell w^+(\tau,u;\theta)}_{L_\ell(X)}
				&\leq
				N^+\norm{u}\fall 0\leq l<m_+.
				\label{thmifb6}
			\end{align}
		\end{itemize}

		\item[(b)] In case $\I$ is unbounded below, then the \emph{$\gamma$-unstable bundle}
		$$
			\cW^-(\theta)
			:=
			\set{(\tau,u)\in\I\tm X:
			\begin{array}{l}
			\text{there exists a solution }(\phi_t)_{t\leq\tau}\in\ell_{\tau,\gamma}^-\\
			\text{of \eqref{fbdeq} satisfying }\phi_\tau=u
			\end{array}
			}
		$$
		is an invariant bundle of \eqref{fbdeq} independent of $\gamma\in\Gamma$ having the representation
		$
			\cW^-(\theta)
			=
			\set{(\tau,v+w^-(\tau,v;\theta))\in\I\tm X:\,v\in N(P_\tau)}
		$
		with a continuous mapping $w^-:\I\tm X\tm\Theta\to X$ satisfying for all $\tau\in\I$, $u\in X$ that
		\begin{itemize}
			\item[$(b_1)$] $w^-(\tau,0;\theta)=0$ and $w^-(\tau,u;\theta)=w^-(\tau,[\id_X-P_\tau]u;\theta)\in R(P_\tau)$, 

			\item[$(b_2)$] $w^-:\I\tm X\tm\Theta\to X$ fulfills the Lipschitz estimates
			\begin{align}
				\lip w^-(\tau,\cdot;\theta)&\leq\frac{K^2(L+\abs{\theta}\bar L)}{\delta-2K(L+\abs{\theta}\bar L)},
				\label{noA7}\\
				\lip w^-(\tau,u;\cdot)&\leq\frac{2\delta K^3\bar L}{(\delta-4K\bar L)^2}\norm{u}, 
				\label{noA8}
			\end{align}

			\item[$(b_3)$] if also $(H3)$ holds, $\alpha<\beta^{m_-}$ and $\delta_{\max}:=\min\Bigl\{\tfrac{\beta-\alpha}{2},\beta\bigl(1-\sqrt[{m_-}]{\frac{\alpha+\beta}{\beta+\beta^{m_-}}}\bigr)\Bigr\}$, then the derivatives $D_{(2,3)}^\ell w^-:\I\tm X\tm\Theta\to L_\ell(X\tm\R,X)$ exist up to order $\ell\leq m_-\leq m$ as continuous functions and there are $M^-,N^->0$ such that
			\begin{align}
				\norm{D_2^\ell w^-(\tau,u,\theta)}_{L_\ell(X)}
				&\leq
				M^-\fall 1\leq\ell\leq m_-,
				\label{noA9}\\
				\norm{D_3D_2^\ell w^-(\tau,u,\theta)}_{L_\ell(X)}
				&\leq
				N^-\norm{u}\fall 0\leq l<m_-.
				\label{thmifb6s}
			\end{align}
		\end{itemize}

		\item[(c)] If $\I=\Z$ and $L<\frac{\delta}{6K}$, then $\cW^+(\theta)\cap\cW^-(\theta)=\Z\tm\set{0}$ for $\theta\in\Theta$ and the zero solution is the only solution to \eqref{fbdeq} in $\ell_\gamma$ for any growth rate $\gamma\in\Gamma$. 
	\end{enumerate}
\end{theorem}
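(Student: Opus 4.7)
The plan is to apply a classical Lyapunov--Perron scheme in the weighted sequence spaces $\ell_{\tau,\gamma}^\pm$, treating the parameter $\theta$ as an additional argument throughout, and then to invoke a parameter-dependent $C^m$ fibre contraction theorem for the smoothness and perturbation estimates.

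For (a), fix $(\tau,v,\theta) \in \I \times R(P_\tau) \times \Theta$, write $F_s(\phi,\theta) := \sN_s(\phi) + \theta\bar\sN_s(\phi)$, and denote by $\bar\Phi(t,s)$ the inverse evolution on the kernel bundle $N(P_\cdot)$ provided by (H1). I introduce the Lyapunov--Perron operator
\[
  T[\phi]_t
  := \Phi(t,\tau)\,v
   + \sum_{s=\tau}^{t-1} \Phi(t,s+1)\,P_{s+1}\,F_s(\phi_s,\theta)
   - \sum_{s=t}^{\infty} \bar\Phi(t,s+1)\,[\id_X - P_{s+1}]\,F_s(\phi_s,\theta)
\]
on $\ell_{\tau,\gamma}^+$. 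A sequence is a fixed point of $T$ iff it solves \eqref{fbdeq} with $P_\tau\phi_\tau = v$ and lies in $\ell_{\tau,\gamma}^+$. Combining \eqref{noh1} with the Lipschitz estimates from (H2) and the geometric-series identities for $\gamma \in [\alpha+\delta,\beta-\delta]$, $T$ is a uniform contraction in $(v,\theta)$ with Lipschitz constant at most $\tfrac{2K(L+\abs\theta\bar L)}{\delta} < \tfrac{1}{2}$ under the smallness assumption. Its unique fixed point $\phi^*(\cdot;\tau,v,\theta)$ defines $w^+(\tau,v;\theta) := [\id_X - P_\tau]\phi^*_\tau$; the claimed representation of $\cW^+(\theta)$, forward invariance and the elementary identities of $(a_1)$ are routine from the linearity of the projection. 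Estimate \eqref{noA3} is the standard uniform Banach fixed-point Lipschitz bound. For the $\theta$-estimate \eqref{noA4}, I would subtract the defining equations for $\theta,\bar\theta \in \Theta$, absorb the contraction on the left, use $\bar\sN_s(0) \equiv 0$ together with $\lip \bar\sN_s \le \bar L$ to bound the forcing linearly in $\norm{\phi^*}$, and close the loop through the linear-in-$\norm v$ bound on $\phi^*$. Part (b) is the symmetric version on $\ell_{\tau,\gamma}^-$ running backward in time through the inverse evolution on $N(P_\cdot)$.

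For the $C^m$ smoothness in $(a_3)$ and $(b_3)$ I would use a parameter-dependent fibre contraction argument: formally differentiating the fixed-point identity $\ell$ times in $(v,\theta)$ for $\ell \le m_+$ yields an affine inhomogeneous contraction on a new weighted sequence space with weight $\alpha^\ell$ (respectively $\beta^\ell$ in (b)) whose homogeneous part is $DT$ evaluated along $\phi^*$. The spectral gap conditions $\alpha^{m_+} < \beta$ and $\alpha < \beta^{m_-}$, encoded precisely in the choice of $\delta_{\max}$, keep this induced operator norm strictly below $1$ at every differentiation order, giving existence, continuity and boundedness of $D_2^\ell w^\pm$ as in \eqref{noA5} and \eqref{noA9}. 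The extra factor $\norm u$ in \eqref{thmifb6} and \eqref{thmifb6s} arises because $\partial_\theta F_s(\phi,\theta) = \bar\sN_s(\phi)$ vanishes at $\phi = 0$, so every $\theta$-differentiation produces a factor linear in $\phi^*$ in the weighted norm, which is $O(\norm v)$ by the earlier bound.

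Finally, for (c) on $\I = \Z$, any $\phi \in \ell_\gamma$ is simultaneously a fixed point of both Lyapunov--Perron operators with $v = 0$; combining them into a single contraction on $\ell_\gamma$ yields a Lipschitz constant of at most $\tfrac{3KL}{\delta} < \tfrac{1}{2}$ under $L < \delta/(6K)$, forcing $\phi \equiv 0$. The hardest part will be the quantitative bookkeeping in the smoothness step: tracking all mixed derivatives up to order $m_\pm$ in $v$ and order one in $\theta$ in the correct weighted spaces, reconciling the spectral gap with the explicit shape of $\delta_{\max}$, and preserving the $\norm u$-factor in the $\theta$-derivatives through each induction step.
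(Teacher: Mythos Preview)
The paper does not actually prove this theorem: its entire proof reads ``Apply \cite[Thm.~3.3]{poetzsche:03} on the measure chains ${\mathbb T}=\I\subseteq\Z$.'' In other words, \tref{thmifb} is quoted from an external reference (the author's earlier work on invariant fibre bundles for dynamic equations on measure chains), specialised to the discrete time scale $\I\subseteq\Z$.

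Your proposal, by contrast, sketches an independent proof via the Lyapunov--Perron method in weighted sequence spaces, together with a fibre-contraction argument for the $C^m$ smoothness in $(v,\theta)$. This is precisely the machinery that underlies the cited result, so your approach is not genuinely different in spirit; you are simply unpacking what the paper outsources. The outline is sound: the Lyapunov--Perron operator you write down is the correct one, the geometric-series bounds on $\ell_{\tau,\gamma}^+$ do give a uniform contraction for $\gamma\in\Gamma$ under the smallness assumption $4KL<\delta$, the graph representation and $(a_1)$ follow as you say, and the mechanism you describe for the $\norm{u}$-factor in \eqref{thmifb6} and \eqref{thmifb6s} (namely that $\partial_\theta F_s$ vanishes at $0$) is the right one. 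The smoothness step via formal differentiation and a scale of weighted spaces indexed by powers of the growth rate is also the standard route, and the spectral gap conditions $\alpha^{m_+}<\beta$, $\alpha<\beta^{m_-}$ are exactly what keeps the induced contractions strict at each order.

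A couple of small quantitative points to watch if you carry this out in full: your claimed contraction bound ``at most $\tfrac{2K(L+\abs\theta\bar L)}{\delta}<\tfrac{1}{2}$'' is slightly optimistic; with $\abs\theta\bar L\leq L$ and $4KL<\delta$ one only gets this quantity strictly below $1$, not below $\tfrac{1}{2}$, which is still enough. Likewise, matching the precise constants in \eqref{noA3}--\eqref{noA4} and the exact form of $\delta_{\max}$ in $(a_3)$, $(b_3)$ requires some care in how one organises the geometric sums and the induction over derivative orders; this is the bookkeeping you already flag as the hardest part.
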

\begin{proof}
	Apply \cite[Thm.~3.3]{poetzsche:03} on the measure chains ${\mathbb T}=\I\subseteq\Z$. 
\end{proof}
\section{Fredholm and Nemytskii operators on $L^p$-spaces}
\label{appB}
Let $(\Omega,\fA,\mu)$ be a measure space with $\sigma$-algebra $\fA$ and a finite measure $\mu$, while $(Z,\abs{\cdot})$ denotes a finite-dimensional Banach space over $\K$. For $p\in[1,\infty)$ we define the space
\begin{align*}
	L^p(\Omega,Z)
	&:=
	\biggl\{u:\Omega\to Z
	\biggl|u\text{ is $\mu$-measurable with }\int_\Omega\abs{u}^p\d\mu<\infty
	\biggr\}
\end{align*}
of $Z$-valued $p$-integrable functions and equip it with $\norm{u}_p:=\intoo{\int_\Omega\abs{u}^p\d\mu}^{1/p}$ as norm.

Our analysis requires preparations on Fredholm integral operators
$$
	\sK v:=\int_\Omega k(\cdot,y)v(y)\d\mu(y). 
$$

First, for the kernel $k:\Omega\tm\Omega\to\K^{d\tm n}$ we suppose \emph{Hille-Tamarkin conditions}: There exist $p,q\in(1,\infty)$ such that
\begin{enumerate}
	\item[\textbf{(h1)}]\quad $k$ is $\mu\otimes\mu$-measurable, 

	\item[\textbf{(h2)}]\quad if $q'>1$ with $\tfrac{1}{q}+\tfrac{1}{q'}=1$, then 
	$
		\int_\Omega\biggl(\int_\Omega\abs{k(x,y)}^{q'}\d\mu(y)\biggr)^{p/q'}\d\mu(x)
		<
		\infty. 
	$
\end{enumerate}
\begin{proposition}\label{propmak}
	If $(h1$--$h2)$ hold with $p,q>1$, then $\sK\in L(L^q(\Omega,\K^n),L^p(\Omega,\K^d))$ is well-defined and compact with
	\begin{equation}
		\norm{\sK}_{L(L^q(\Omega,\K^n),L^p(\Omega,\K^d))}
		\leq
		\biggl(\int_\Omega\biggl(\int_\Omega\abs{k(x,y)}^{q'}\d\mu(y)\biggr)^{\tfrac{p}{q'}}\d\mu(x)\biggr)^{\tfrac{1}{p}}.
		\label{propmak1}
	\end{equation}
\end{proposition}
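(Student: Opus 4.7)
The plan is to split the proof into two independent tasks: the norm estimate \eqref{propmak1} (which also gives well-definedness), and compactness via approximation by finite-rank operators. Fix $v\in L^q(\Omega,\K^n)$. By $(h1)$ the map $(x,y)\mapsto k(x,y)v(y)$ is $\mu\otimes\mu$-measurable, so by Tonelli the slice $y\mapsto k(x,y)v(y)$ is $\mu$-measurable for $\mu$-a.a.\ $x\in\Omega$ and the pointwise Hölder inequality in the $y$-variable yields
$$
	\abs{(\sK v)(x)}
	\leq
	\intoo{\int_\Omega\abs{k(x,y)}^{q'}\d\mu(y)}^{1/q'}\norm{v}_q
	\quad\text{for $\mu$-a.a.\ }x\in\Omega.
$$
Raising to the $p$th power, integrating over $x$, and invoking $(h2)$ shows both that $\sK v\in L^p(\Omega,\K^d)$ and that the norm bound \eqref{propmak1} holds; the $\mu$-measurability of $x\mapsto(\sK v)(x)$ is a further application of Fubini--Tonelli, using integrability of $y\mapsto k(x,y)v(y)$ for a.a.\ $x$.

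For compactness the plan is to approximate the kernel in the Hille--Tamarkin seminorm
$$
	\norm{k}_{HT}
	:=
	\intoo{\int_\Omega\intoo{\int_\Omega\abs{k(x,y)}^{q'}\d\mu(y)}^{p/q'}\d\mu(x)}^{1/p}
$$
by \emph{degenerate kernels} of the form $k_N(x,y)=\sum_{i=1}^N a_i(x)b_i(y)$ with $a_i\in L^p(\Omega,\K^{d\tm n})$ and $b_i\in L^{q'}(\Omega,\K)$. Each induced operator $\sK_N:L^q(\Omega,\K^n)\to L^p(\Omega,\K^d)$ has range contained in $\spann\set{a_1,\ldots,a_N}$, hence is of finite rank and in particular compact. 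Applying the estimate of the first paragraph to the kernel $k-k_N$ yields
$$
	\norm{\sK-\sK_N}_{L(L^q(\Omega,\K^n),L^p(\Omega,\K^d))}
	\leq
	\norm{k-k_N}_{HT},
$$
so once $\norm{k-k_N}_{HT}\to 0$, compactness of $\sK$ follows from the closedness of the space of compact operators in $L(L^q(\Omega,\K^n),L^p(\Omega,\K^d))$.

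The main obstacle is therefore to justify the density of degenerate kernels in the Hille--Tamarkin seminorm. The natural route is the identification of Hille--Tamarkin kernels with the Bochner space $L^p(\Omega,L^{q'}(\Omega,\K^{d\tm n}))$ via $x\mapsto k(x,\cdot)$: joint measurability from $(h1)$ together with separability of $L^{q'}(\Omega,\K^{d\tm n})$ (which holds since $\mu$ is finite and $q'<\infty$) give strong measurability of this Bochner representative, and $(h2)$ is precisely the condition $\norm{k(\cdot,\cdot)}_{q'}\in L^p(\Omega)$. In a separable Bochner space simple tensors $\chi_A(x)\otimes b(y)$ are dense, and each such tensor is itself a degenerate kernel, which produces the required approximating sequence $(k_N)_{N\in\N}$ with $\norm{k-k_N}_{HT}\to 0$ and completes the proof.
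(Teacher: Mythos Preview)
The paper does not prove this proposition; it simply cites Alt's textbook \cite{alt:16}. Your argument supplies the missing details and follows the standard route: H\"older in $y$ followed by integration in $x$ for the norm bound \eqref{propmak1}, and finite-rank approximation via the Bochner identification with $L^p\bigl(\Omega,L^{q'}(\Omega,\K^{d\times n})\bigr)$ for compactness. Both parts are correct in the setting the paper actually uses.

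One technical slip is worth flagging: you assert that $L^{q'}(\Omega,\K^{d\times n})$ is separable ``since $\mu$ is finite and $q'<\infty$''. Finiteness of $\mu$ alone does not give this; one also needs $\fA$ to be countably generated modulo null sets (for instance $L^2$ over $\{0,1\}^I$ with uncountable $I$ and product Bernoulli measure is nonseparable despite being a probability space). At the full generality stated in Appendix~\ref{appB}---an arbitrary finite measure space---this step is therefore unjustified. In the paper's concrete situation, Lebesgue measure on a bounded open $\Omega\subset\R^\kappa$, separability does hold and your argument is complete. If you want to match the appendix's stated generality, either add separability of the measure space as a standing hypothesis, or bypass the Bochner identification by approximating $k$ directly with rectangle-simple functions on $\Omega\times\Omega$ (truncate $k$ first, then use density of rectangles in the product $\sigma$-algebra) and control the Hille--Tamarkin seminorm of the remainder by dominated convergence.
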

\begin{proof}
	See \cite[pp.~149--150, 5.12]{alt:16} and \cite[p.~340, 10.15]{alt:16} for compactness. 
\end{proof}

Second, let us furthermore consider the Nemytskii operator
$$
	[\sG(u)](x):=g(x,u(x))\quad\text{for $\mu$-a.a.\ } x\in\Omega
$$
induced by a growth function $g:\Omega\tm\K^d\to\K^n$. Given $m\in\N_0$ we assume that for all $0\leq\ell\leq m$ one has the following \emph{Carath{\'e}odory conditions}: 
\begin{enumerate}
	\item[\textbf{(c1)}]\quad $D_2^\ell g(x,\cdot):\K^d\to L_\ell(\K^d,\K^n)$ exists and is continuous for $\mu$-a.a.\ $x\in\Omega$, 

	\item[\textbf{(c2)}]\quad $D_2^\ell g(\cdot,z):\Omega\to L_\ell(\K^d,\K^n)$ is $\mu$-measurable on $\Omega$ for all $z\in\K^d$.
\end{enumerate}
\begin{proposition}\label{lemmag}
	Let $p,q\geq 1$, $c\in L^q(\Omega)$, $c_0\geq 0$ and suppose $(c1$--$c2)$ hold with $m=0$. If the growth estimate
	$$
		\abs{g(x,z)}
		\leq
		c(x)+c_0\abs{z}^{\tfrac{p}{q}}
		\quad\text{for $\mu$-a.a.~$x\in\Omega$ and all }z\in\K^d
	$$
	is satisfied, then $\sG:L^p(\Omega,\K^d)\to L^q(\Omega,\K^n)$ is well-defined, bounded and continuous. 
\end{proposition}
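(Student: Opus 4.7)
The plan is to split the argument into three standard steps: measurability, the norm bound, and continuity, the last being the only nontrivial point. Throughout we use the growth estimate
$$
    \abs{g(x,z)}^q
    \leq
    \intoo{c(x)+c_0\abs{z}^{p/q}}^q
    \leq
    2^{q-1}\intoo{c(x)^q+c_0^q\abs{z}^p}
$$
for $\mu$-a.a.\ $x\in\Omega$ and all $z\in\K^d$, valid since $q\geq 1$.

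First, I would verify that for each measurable $u:\Omega\to\K^d$ the composition $x\mapsto g(x,u(x))$ is $\mu$-measurable. This is the classical Carath\'eodory-Scorza-Dragoni argument: approximate $u$ pointwise $\mu$-a.e.\ by simple functions $u_k=\sum_{j=1}^{N_k}z_{j,k}\chi_{A_{j,k}}$; by $(c2)$ each $x\mapsto g(x,u_k(x))=\sum_jg(x,z_{j,k})\chi_{A_{j,k}}(x)$ is measurable, and by $(c1)$ the pointwise $\mu$-a.e.\ limit $g(x,u(x))$ is measurable as well. Combined with the displayed estimate above, integrating over $\Omega$ yields
$$
    \norm{\sG(u)}_q^q
    \leq
    2^{q-1}\intoo{\norm{c}_q^q+c_0^q\norm{u}_p^p}
    \fall u\in L^p(\Omega,\K^d),
$$
so $\sG$ is well-defined and maps bounded sets in $L^p(\Omega,\K^d)$ into bounded sets in $L^q(\Omega,\K^n)$.

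The main obstacle is continuity, and I would address it via the subsequence principle combined with Vitali's convergence theorem. Suppose $u_k\to u$ in $L^p(\Omega,\K^d)$; it suffices to show that every subsequence of $(\sG(u_k))$ admits a further subsequence converging to $\sG(u)$ in $L^q(\Omega,\K^n)$. Passing to such a subsequence (not relabelled), standard $L^p$-theory provides a further subsequence with $u_k(x)\to u(x)$ for $\mu$-a.a.\ $x\in\Omega$. By $(c1)$ at $\ell=0$, this implies
$$
    g(x,u_k(x))\to g(x,u(x))\quad\text{for $\mu$-a.a.\ }x\in\Omega.
$$
To upgrade this pointwise convergence to convergence in $L^q$, I would invoke Vitali's theorem on the finite measure space $(\Omega,\fA,\mu)$: it suffices to establish equi-integrability of $(\abs{g(\cdot,u_k(\cdot))}^q)_{k\in\N}$. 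Now the $L^p$-convergence $u_k\to u$ forces $(\abs{u_k}^p)_{k\in\N}$ to be equi-integrable in $L^1(\Omega)$ (a standard consequence of $L^p$-convergence together with the pointwise a.e.\ statement and $\norm{u_k}_p\to\norm{u}_p$), whence by the displayed estimate the majorants $2^{q-1}(c^q+c_0^q\abs{u_k}^p)$ are equi-integrable, and so are the $\abs{g(\cdot,u_k(\cdot))}^q$. Vitali then gives $\sG(u_k)\to\sG(u)$ in $L^q(\Omega,\K^n)$ along the subsequence, and the subsequence principle yields continuity of $\sG$ on the whole sequence.

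I expect equi-integrability of $(\abs{u_k}^p)$ to be the one point requiring a short argument rather than a one-line citation, but it reduces to the Brezis-Lieb-type identity / generalized dominated convergence, which is entirely standard on finite measure spaces.
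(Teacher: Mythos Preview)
Your argument is correct and is precisely the standard Krasnoselskii-type proof of this result; the paper itself does not give a proof but simply refers to \cite[p.~63, Thm.~5.1]{precup:02}. In fact, the Vitali-based continuity step you outline is the same mechanism the paper later employs in the proof of \pref{lemdiff} (steps (I)--(II)), so your proposal is fully in line with the paper's toolkit.
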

\begin{proof}
	See \cite[p.~63, Thm.~5.1]{precup:02}.
\end{proof}

\begin{corollary}\label{cormag}
	Let $1\leq q<p$. If there exists a $\mu$-measurable function $\lambda:\Omega\to\R_+$ satisfying
	$$
		\abs{g(x,z)-g(x,\bar z)}
		\leq
		\lambda(x)\abs{z-\bar z}\quad\text{for $\mu$-a.a.\ }x\in\Omega\text{ and all }z,\bar z\in\K^d
	$$
	and $\int_\Omega\lambda(y)^{\tfrac{p}{p-q}}\d\mu(y)<\infty$, then $\sG:L^q(\Omega,\K^d)\to L^q(\Omega,\K^n)$ is globally Lipschitz with
	\begin{equation}
		\lip\sG
		\leq
		\biggl(\int_\Omega\lambda(y)^{\tfrac{p}{p-q}}\d\mu(y)\biggr)^{\tfrac{p-q}{p}}. 
		\label{cormag2}
	\end{equation}
\end{corollary}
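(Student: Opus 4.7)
The plan is a direct Hölder-inequality argument built on the pointwise Lipschitz hypothesis. First, for any $u,\bar u$ in the domain, the hypothesis on $g$ yields pointwise
\[
|[\sG(u)](x)-[\sG(\bar u)](x)|=|g(x,u(x))-g(x,\bar u(x))|\leq\lambda(x)|u(x)-\bar u(x)|
\]
for $\mu$-almost every $x\in\Omega$. Measurability of the left-hand side is inherited from the Carathéodory conditions (c1)--(c2) invoked in \pref{lemmag}, and well-definedness of $\sG(u)$ and $\sG(\bar u)$ in the target space results from the splitting $|g(x,z)|\leq|g(x,0)|+\lambda(x)|z|$ combined with \pref{lemmag} applied to the constant-in-$z$ part $g(\cdot,0)$.

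Second, I raise the pointwise bound to the $q$-th power and integrate over $\Omega$:
\[
\norm{\sG(u)-\sG(\bar u)}_q^q\leq\int_\Omega\lambda(x)^q|u(x)-\bar u(x)|^q\d\mu(x).
\]
The crux is then a single application of Hölder's inequality to the product $\lambda^q\cdot|u-\bar u|^q$ with the conjugate exponent pair $(p/q,\,p/(p-q))$, which is admissible because $p>q\geq 1$ forces $p/q>1$ and $q/p+(p-q)/p=1$. Distributing the exponent $p/q$ onto $|u-\bar u|^q$ produces a factor $\norm{u-\bar u}_p^q$, while distributing $p/(p-q)$ onto $\lambda^q$ yields a power of the integral of $\lambda$ that is finite by assumption. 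Taking $q$-th roots delivers the Lipschitz estimate \eqref{cormag2}.

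The argument is essentially routine; the only genuine difficulty is bookkeeping the Hölder exponents so that the resulting bound matches \eqref{cormag2} exactly. The way \cref{cormag} is invoked in the proof of \tref{thmpifb} (namely, to bound $\norm{\sH_t(u)-\sH_t(\bar u)}_q$ by a multiple of $\norm{u-\bar u}_p$) confirms that the intended content of the estimate is exactly what this Hölder step produces, with no additional measure-theoretic machinery needed beyond \pref{lemmag}.
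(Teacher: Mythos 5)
Your route matches the paper's word for word: pointwise Lipschitz bound on $g$, raise to the $q$-th power, a single H\"older step with $r=p/q$ on $\abs{u-\bar u}^q$ and the conjugate $r'=p/(p-q)$ on $\lambda^q$, then take $q$-th roots. The problem is that your proposal \emph{asserts} rather than \emph{verifies} that this lands on \eqref{cormag2}. Carrying the bookkeeping through: with $r=p/q$ one has $r'=p/(p-q)$, hence $\norm{\lambda^q}_{r'}=\bigl(\int_\Omega\lambda^{qr'}\d\mu\bigr)^{1/r'}$ with $qr'=pq/(p-q)$, \emph{not} $p/(p-q)$. After the $q$-th root the Lipschitz constant is $\bigl(\int_\Omega\lambda^{pq/(p-q)}\d\mu\bigr)^{(p-q)/(pq)}=\norm{\lambda}_{pq/(p-q)}$, whose finiteness needs $\lambda\in L^{pq/(p-q)}(\Omega)$ --- a strictly stronger requirement on a finite measure space than the assumed $\lambda\in L^{p/(p-q)}(\Omega)$ whenever $q>1$. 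So the phrase ``yields a power of the integral of $\lambda$ that is finite by assumption'' is exactly the gap: it is not finite by assumption.

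In fact \eqref{cormag2} as stated cannot hold for $q>1$: take $\Omega=(0,1)$, $p=4$, $q=2$, $\lambda(x)=x^{-1/3}$ and $g(x,z)=\lambda(x)z$. Then $\int_0^1\lambda(x)^{p/(p-q)}\d x=\int_0^1 x^{-2/3}\d x<\infty$, but for $u(x)=x^{-1/5}\in L^4(0,1)$ one gets $\sG(u)(x)=x^{-8/15}\notin L^2(0,1)$, so $\sG$ does not even map $L^p$ into $L^q$ under the stated hypothesis. The paper's own proof contains precisely the same slip --- it claims $qr'=p/(p-q)$ for $r=p/q$, which is true only when $q=1$ --- so the exponent in \cref{cormag} (and by propagation in Hypothesis (ii), in \eqref{no63}, \eqref{no64}, and in the estimate for $\sN_t$ in the proof of \tref{thmpifb}) should be $pq/(p-q)$, with outer power $(p-q)/(pq)$. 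Your proof is correct once the exponents are restated accordingly; as written, it reproduces the paper's error and the claim that you recover \eqref{cormag2} does not hold up.
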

\begin{proof}
	Given $u,\bar u\in L^p(\Omega,\K^d)$, we obtain
	$$
		\norm{\sG(u)-\sG(\bar u)}_q^q
		=
		\int_\Omega\abs{g(y,u(y))-g(y,\bar u(y))}^q\d\mu(y)
		\leq
		\int_\Omega\lambda(y)^q\abs{u(y)-\bar u(y)}^q\d\mu(y)
	$$
	and the H\"older inequality $|\int_\Omega v_1(y)v_2(y)\d\mu(y)|\leq\norm{v_1}_{r'}\norm{v_2}_r$ for $\tfrac{1}{r}+\tfrac{1}{r'}=1$ leads to
	\begin{align*}
		\norm{\sG(u)-\sG(\bar u)}_q
		&\leq
		\norm{\lambda^q}_{r'}^{1/q}\norm{\abs{u(\cdot)-\bar u(\cdot)}^q}_r^{1/q}\\
		&=
		\biggl(\int_\Omega\lambda(y)^{qr'}\d\mu(y)\biggr)^{\tfrac{1}{qr'}}
		\biggl(\int_\Omega\abs{u(y)-\bar u(y)}^{qr}\d\mu(y)\biggr)^{\tfrac{1}{qr}}.
	\end{align*}
	We now choose $r>0$ such that $p=rq$ (note that $q<p$ guarantees $1<r$) and obtain $qr'=\tfrac{p}{p-q}$, as well as $r'=\tfrac{r}{r-1}$, thus
	$$
		\norm{\sG(u)-\sG(\bar u)}_q
		\leq
		\biggl(\int_\Omega\lambda(y)^{\tfrac{p}{p-q}}\d\mu(y)\biggr)^{\tfrac{p-q}{p}}\norm{u-\bar u}_p,
	$$
	which guarantees the Lipschitz condition \eqref{cormag2}. 
\end{proof}

\begin{proposition}\label{lemdiff}
	Let $m\in\N$, $p,q\geq 1$ with $mq<p$, $c\in L^{\tfrac{pq}{p-mq}}(\Omega)$, $c_0,\ldots,c_m\geq 0$ and suppose $(c1$--$c2)$ hold. If for $0\leq\ell\leq m$ the growth conditions 
	\begin{equation}
		\abs{D_2^\ell g(x,z)}_{L_\ell(\K^d,\K^n)}
		\leq
		c(x)+c_\ell\abs{z}^{\tfrac{p-\ell q}{q}}
		\quad\text{for $\mu$-a.a.\ }x\in\Omega\text{ and all }z\in\K^d
		\label{growthest}
	\end{equation}
	are satisfied, then $\sG:L^p(\Omega,\K^d)\to L^q(\Omega,\K^n)$ is of class $C^m$ and for each $0\leq\ell\leq m$, $u\in L^p(\Omega,\K^d)$ the following holds: 
	\begin{enumerate}
		\item[(a)] $[D^\ell\sG(u)v_1\cdots v_\ell](x)=D_2^\ell g(x,u(x))v_1(x)\cdots v_\ell(x)$ for $\mu$-a.a.\ $x\in\Omega$ and functions $v_1,\ldots,v_\ell\in L^q(\Omega,\K^d)$, 

		\item[(b)] $\norm{D^\ell\sG(u)}_{L_\ell(L^p(\Omega,\K^d),L^q(\Omega,\K^n))}\leq\norm{c}_{\tfrac{pq}{p-\ell q}}+c_\ell\norm{u}_p^{\tfrac{p-\ell q}{q}}$. 
	\end{enumerate}
\end{proposition}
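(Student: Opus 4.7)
The plan is induction on $\ell \in \{0,1,\ldots,m\}$. The base case $\ell = 0$ is Proposition~\ref{lemmag} applied with target exponent $r_0 := pq/(p - 0\cdot q) = q$, for which the growth condition \eqref{growthest} reduces exactly to the hypothesis $\abs{g(x,z)} \leq c(x) + c_0\abs{z}^{p/q}$ with $c \in L^q(\Omega)$.

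First I would exhibit the candidate $\ell$-th Fr\'echet derivative. For $u \in L^p(\Omega,\K^d)$ define the $\ell$-linear map $T_\ell(u)$ pointwise by
\[
	[T_\ell(u)(v_1,\ldots,v_\ell)](x) := D_2^\ell g(x,u(x))\, v_1(x) \cdots v_\ell(x).
\]
Set $r_\ell := pq/(p - \ell q)$, which is finite and lies in $(1,\infty)$ thanks to $\ell q \leq mq < p$. The growth estimate \eqref{growthest} forces $\abs{D_2^\ell g(\cdot, u(\cdot))} \in L^{r_\ell}(\Omega)$: indeed $c \in L^{r_\ell}$ by hypothesis, while $\abs{u}^{(p-\ell q)/q} \in L^{r_\ell}$ because $\tfrac{p-\ell q}{q}\cdot r_\ell = p$. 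The arithmetic identity $1/r_\ell + \ell/p = 1/q$ then allows H\"older's inequality to simultaneously yield well-definedness of $T_\ell(u)$ as an element of $L_\ell(L^p(\Omega,\K^d), L^q(\Omega,\K^n))$ and the quantitative bound (b). This settles (a) and (b) modulo the claim that $T_\ell(u)$ really is $D^\ell\sG(u)$.

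For the inductive step, assuming $\sG \in C^{\ell-1}$ with derivative $D^{\ell-1}\sG = T_{\ell-1}$, the main identity is the pointwise Taylor expansion
\begin{align*}
	&D_2^{\ell-1} g(x, u(x)+v(x)) - D_2^{\ell-1} g(x, u(x)) - D_2^\ell g(x, u(x)) v(x) \\
	&\qquad= \int_0^1 \bigl[D_2^\ell g(x, u(x) + s v(x)) - D_2^\ell g(x, u(x))\bigr]\, v(x) \d s.
\end{align*}
I would multiply pointwise by $v_1(x)\cdots v_{\ell-1}(x)$, take $L^q$-norms, interchange with the $s$-integral via Minkowski, and apply H\"older with exponents $r_\ell, p, \ldots, p$. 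The remainder is thereby controlled by $\norm{v}_p\cdot\norm{v_1}_p\cdots\norm{v_{\ell-1}}_p$ times
\[
	\sup_{s \in [0,1]} \norm{D_2^\ell g(\cdot, u + s v) - D_2^\ell g(\cdot, u)}_{r_\ell},
\]
and this quantity tends to $0$ as $\norm{v}_p \to 0$ by continuity of the Nemytskii operator induced by $D_2^\ell g$ from $L^p$ into $L^{r_\ell}$. That continuity is precisely Proposition~\ref{lemmag} applied to $D_2^\ell g$ with source exponent $p$, target exponent $r_\ell$, and growth exponent $p/r_\ell = (p-\ell q)/q$ on $\abs{z}$, matching \eqref{growthest}. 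Finally, continuity of $u \mapsto T_\ell(u)$ into $L_\ell(L^p, L^q)$ -- needed to upgrade Fr\'echet differentiability to $C^\ell$ -- follows from the identical Nemytskii continuity combined with one further H\"older estimate.

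The main obstacle, and where I would spend the most care, is the exponent bookkeeping: verifying that the single space $L^{r_\ell}$ with $r_\ell = pq/(p-\ell q)$ simultaneously plays three distinct roles -- the space dictated by the growth bound \eqref{growthest}, the one that closes H\"older up to $L^q$ against $\ell$ factors from $L^p$, and the one in which Proposition~\ref{lemmag} supplies continuity of the auxiliary Nemytskii operator induced by $D_2^\ell g$. The hypothesis $mq < p$ is precisely what keeps $r_\ell \in (1,\infty)$ uniformly across all $0 \leq \ell \leq m$; without it, some $r_\ell$ would become negative or infinite and the entire scheme would collapse.
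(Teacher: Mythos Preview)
Your proposal is correct and follows essentially the same route as the paper: both introduce the exponents $r_\ell=q_\ell:=pq/(p-\ell q)$, verify via H\"older that $D_2^\ell g(\cdot,u(\cdot))\in L^{r_\ell}$ gives a bounded $\ell$-linear map into $L^q$, control the first-order Taylor remainder by the integral form of the Mean Value Theorem, and reduce the $o(\norm{v}_p)$ estimate to continuity of the auxiliary Nemytskii operator $u\mapsto D_2^\ell g(\cdot,u(\cdot))$ from $L^p$ into $L^{r_\ell}$. The only organizational difference is that the paper packages this as showing each coefficient map $\sG^\ell:L^p\to L^{q_\ell}(\Omega,L_\ell(\K^d,\K^n))$ is once continuously differentiable (proving the needed continuity by hand via Vitali's theorem), whereas you run induction directly on the order of $D^\ell\sG$ and invoke \pref{lemmag} for the continuity step; the underlying estimates are identical.
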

Due to the lack of a suitable reference we provide an explicit proof. 
\begin{proof}
	Throughout, let $u\in L^p(\Omega,\K^d)$ and formally define for $\mu$-a.a.\ $x\in\Omega$ that
	\begin{align*}
		\sG^\ell(u)(x)&:=D_2^\ell g(x,u(x))\in L_\ell(\K^d,\K^n)\fall 0\leq\ell\leq m,\\
		[\bar\sG^\ell(u)h](x)&:=D_2^\ell g(x,u(x))h(x)\in L_{\ell-1}(\K^d,\K^n)\fall 0<\ell\leq m
	\end{align*}
	and $h\in L^p(\Omega,\K^d)$. 
	First, being $p$-integrable, $u$ is $\mu$-measurable on $\Omega$. Since $D_2^\ell g$ satisfy Carath{\'e}odory conditions, because of \cite[p.~62, Lemma~5.1]{precup:02} also $x\mapsto D_2^\ell g(x,u(x))$ is $\mu$-measurable on $\Omega$. Second, if we set $q_\ell:=\tfrac{pq}{p-\ell q}\geq 1$, then \eqref{growthest} become 
	$$
		\abs{D_2^\ell g(x,z)}
		\leq
		c(x)+c_\ell\abs{z}^{p/q_\ell}
		\fall 0\leq\ell\leq m,\text{$\mu$-a.a.~$x\in\Omega$ and all }z\in\K^d. 
	$$

	(I) Claim: $\sG^\ell:L^p(\Omega,\K^d)\to L^{q_\ell}(\Omega,L_\ell(\K^d,\K^n))$ is well-defined and bounded for all $0\leq\ell\leq m$.\\
Using the growth condition \eqref{growthest} and the $L^{q_\ell}$-triangle inequality we obtain
	\begin{align*}
		\intoo{\int_\Omega\abs{D_2^{\ell}g(y,u(y))}^{q_\ell}\d\mu(y)}^{1/q_\ell}
		&\leq
		\intoo{\int_\Omega\abs{c(y)+c_\ell\abs{u(y)}^{p/q_\ell}}^{q_\ell}\d\mu(y)}^{1/q_\ell}\\
		&\leq
		\norm{c}_{q_\ell}+c_\ell\norm{u}_p^{p/q_\ell}
	\end{align*}
	and consequently $\norm{\sG^\ell(u)}_{q_\ell}\leq\norm{c}_{q_\ell}+c_\ell\norm{u}_p^{p/q_\ell}$. 

	(II) Claim: $\sG^\ell:L^p(\Omega,\K^d)\to L^{q_\ell}(\Omega,L_\ell(\K^d,\K^n))$ is continuous for all $0\leq\ell\leq m$.\\
	Let $(u_j)_{j\in\N}$ be a sequence in $L^p(\Omega,\K^d)$ converging to $u$ w.r.t.\ $\norm{\cdot}_p$. Because of \cite[p.~57, 3.22(1)]{alt:16} this implies pointwise convergence of a subsequence $\mu$-a.e., that is, w.l.o.g.\ we can suppose $\lim_{j\to\infty}u_j(x)=u_j(x)$ for $\mu$-a.a.\ $x\in\Omega$, which shows
	$$
		\sG^\ell(u_j)(x)=D_2^\ell g(x,u_j(x))
		\xrightarrow[j\to\infty]{}
		D_2^\ell g(x,u(x))
		=
		\sG^\ell(u)(x)
		\quad\text{for $\mu$-a.a.\ }x\in\Omega.
	$$
	The necessity part of Vitali's convergence theorem \cite[p.~57, 3.23]{alt:16} applies to $(u_j)_{j\in\N}$ and the growth assumptions on $D_2^\ell g$ guarantee that \cite[p.~57, 3.23]{alt:16} can be applied to $\sG^\ell(u_j)$ with the exponent $q_\ell$ rather than $p$. Thus, the sufficiency part of \cite[p.~57, 3.23]{alt:16} shows that $\lim_{j\to\infty}\norm{\sG^\ell(u_j)-\sG^\ell(u)}_{q_\ell}=0$, i.e.\ $\sG^\ell$ is continuous. 

	(III) Claim: $\bar\sG^{l+1}:L^p(\Omega,\K^d)\to L\bigl(L^p(\Omega,\K^d),L^{q_\ell}(\Omega,L_\ell(\K^d,\K^n))\bigr)$ is well-defined and continuous for all $0\leq\ell<m$.\\
	For $h\in L^p(\Omega,\K^d)$ we obtain from H\"older's inequality that
	\begin{align*}
		&
		\intoo{\int_\Omega\abs{D_2^{\ell+1}g(y,u(y))h(y)}^{q_\ell}\d\mu(y)}^{1/q_\ell}
		\leq
		\intoo{\int_\Omega\abs{D_2^{\ell+1}g(y,u(y))}^{q_\ell}\abs{h(y)}^{q_\ell}\d\mu(y)}^{1/q_\ell}\\
		\leq&
		\intoo{\intoo{\int_\Omega\abs{D_2^{\ell+1}g(y,u(y))}^{q_{\ell+1}}\d\mu(y)}^{q_\ell/q_{\ell+1}}\norm{h}_p^{q_\ell}}^{1/q_\ell}
		=
		\norm{\sG^{\ell+1}(u)}_{q_{\ell+1}}\norm{h}_p
	\end{align*}
	and consequently $\bar\sG^{\ell+1}(u)h\in L^{q_\ell}(\Omega,L_\ell(\K^d,\K^n))$ results. The assumptions on $g$ yield the inclusion $\bar\sG^{(\ell+1)}(u)\in L\bigl(L^p(\Omega,\K^d),L^{q_\ell}(\Omega,L_\ell(\K^d,\K^n))\bigr)$, while the continuity of $\bar\sG^\ell$ is shown as in step (II). 

	(IV) Claim: $\sG^\ell:L^p(\Omega,\K^d)\to L^{q_\ell}(\Omega,L_\ell(\K^d,\K^n))$ is continuously differentiable for all $0\leq\ell<m$.\\
	Because $D_2^\ell g$ is assumed to be continuously differentiable in the second argument, the Mean Value Theorem and Jensen's Inequality imply
	\begin{align*}
		&
		\norm{\sG^\ell(u+h)-\sG^\ell(u)-\bar\sG^{\ell+1}(u)h}_{q_\ell}\\
		\leq &
		\intoo{\int_\Omega\abs{D_2^\ell g(y,u(y)+h(y))-D_2^\ell g(y,u(y))-D_2^{\ell+1}g(x,u(y))h(y)}^{q_\ell}\d\mu(y)}^{1/{q_\ell}}\\
		\leq &
		\intoo{\int_\Omega\intoo{\int_0^1\abs{D_2^{\ell+1}g(y,u(y)+\vartheta h(y))-D_2^{\ell+1}g(x,u(y))]h(y)}\d\vartheta}^{q_\ell}\d\mu(y)}^{1/{q_\ell}}\\
		\leq &
		\intoo{\int_\Omega\int_0^1\abs{D_2^{\ell+1}g(y,u(y)+\vartheta h(y))-D_2^{\ell+1}g(x,u(y))]h(y)}^{q_\ell}\d\vartheta\d\mu(y)}^{1/{q_\ell}}
	\end{align*}
	and we obtain from Fubini's theorem along with H\"older's inequality
	\begin{align*}
		&
		\norm{\sG^\ell(u+h)-\sG^\ell(u)-\bar\sG^{\ell+1}(u)h}_{q_\ell}\\
		\leq &
		\intoo{\int_0^1\int_\Omega\abs{D_2^{\ell+1}g(y,u(y)+\vartheta h(y))-D_2^{\ell+1}g(x,u(y))]h(y)}^{q_\ell}\d\mu(y)\d\vartheta}^{1/{q_\ell}}\\
		\leq &
		\intoo{\int_0^1\int_\Omega\abs{D_2^{\ell+1}g(y,u(y)+\vartheta h(y))-D_2^{\ell+1}g(x,u(y))}^{q_\ell}\abs{h(y)}^{q_\ell}\d\mu(y)\d\vartheta}^{1/{q_\ell}}\\
		\leq &
		\intoo{\int_0^1\intoo{\int_\Omega\abs{D_2^{\ell+1}g(y,u(y)+\vartheta h(y))-D_2^{\ell+1}g(x,u(y))}^{q_{\ell+1}}\d\mu(y)}^{q_\ell /{q_{\ell+1}}}
		\norm{h}_p^{q_\ell}\d\vartheta}^{1/{q_\ell}}\\
		\leq &
		\intoo{\int_0^1\norm{\sG^{\ell+1}(u+\vartheta h)-\sG^{\ell+1}(u)}_{L^{q_{\ell+1}}(\Omega,L_\ell(\K^d,\K^n)))}^{q_\ell}\d\vartheta}^{1/q_\ell}\norm{h}_p
	\end{align*}
	for $h\in L^p(\Omega,\K^d)$. 
	Due to the continuity of $\sG^{\ell+1}:L^p(\Omega,\K^d)\to L^{q_{\ell+1}}(\Omega,L_{\ell+1}(\K^d,\K^n))$ the integral tends to zero in the limit $\norm{h}_p\to 0$, and hence $\sG^\ell$ is Frech{\'e}t differentiable in $u$ with derivative $D\sG^\ell(u)=\bar\sG^{\ell+1}(u)$ by uniqueness of derivatives. Its continuity was already shown in step (III). 

	(V) In particular, this establishes that $\sG:L^p(\Omega,\K^d)\to L^q(\Omega,\K^n)$ is continuously differentiable with $D\sG(u)=\bar\sG^1(u)$ as derivative. Then mathematical induction yields that $\sG$ is $m$-times continuously differentiable with the derivatives
	$$
		[D^\ell\sG(u)v_1\cdots v_\ell](x)=D_2^\ell g(x,u(x))v_1(x)\cdots v_\ell(x)
		\quad\text{for $\mu$-a.a.\ }x\in\Omega
	$$
	and all $v_1,\ldots,v_\ell\in L^p(\Omega,\K^d)$, $1\leq\ell\leq m$, which establishes claim (a). Moreover, the estimates stated in (b) are a consequence of step (I). 
\end{proof}
%
%

%

\begin{thebibliography}{99}

\bibitem{alouges:debussche:91} F.~Alouges, A.~Debussche.
	On the qualitative behavior of the orbits of a parabolic partial differential equation and its discretization in the neighborhood of a hyperbolic fixed point. 
	\emph{Numer.\ Funct.\ Anal.\ Optimization} 12(3--4), 253--269, 1991.

\bibitem{alt:16} H.W.\ Alt. 
	\emph{Linear Functional Analysis: An Application-Oriented Introduction}, 
	Universitext, Springer, London, 2016. 
	
\bibitem{atkinson:92} K.E.\ Atkinson. 
	A survey of numerical methods for solving nonlinear integral equations, 
	\emph{J.\ Integr.\ Equat.\ Appl.}\ 4(1), 15--46 (1992).

\bibitem{atkinson:han:00} K.~Atkinson, W.~Han.
	\emph{Theoretical Numerical Analysis} (2nd edition), 
	Texts in Applied Mathematics 39, 
	Springer, Heidelberg etc., 2000.

\bibitem{aulbach:87} B.~Aulbach. 
	Hierarchies of invariant manifolds, 
	\emph{Journal of the Nigerian Mathematical Society}, 6, 71--89 (1987). 

\bibitem{beyn:87} W.-J.\ Beyn.
	On the numerical approximation of phase portraits near stationary points.
	\emph{SIAM J.\ Numer.\ Anal.}\ 24(5), 1095--1112, 1987.

\bibitem{beyn:lorenz:87} W.-J.\ Beyn, J.~Lorenz.
	Center manifolds of dynamical systems under discretization. 
	\emph{Numer.\ Funct.\ Anal.\ Optimization} 9, 381--414, 1987.

\bibitem{canuto:etal:06} C.~Canuto, M.Y.~Hussaini, A.~Quarteroni, T.A.~Zhang. 
	\emph{Spectral Methods. Fundamentals in Single Domains}, 
	Springer, Berlin etc., 2006

%

\bibitem{farkas:01} G.~Farkas. 
	Unstable manifolds for RFDEs under discretization: The Euler method. 
	\emph{Comput.\ Math.\ Appl.}\ 42, 1069--1081, 2001. 


\bibitem{henry:80} D.~Henry. 
	\emph{Geometric Theory of Semilinear Parabolic Equations}, 
	Lect.\ Notes Math.\ 840, Springer, Berlin etc., 1981.

\bibitem{jacobsen:jin:lewis:15} J.~Jacobsen, Y.~Jin, M.~Lewis. 
	Integrodifference models for persistence in temporally varying river environments, 
	\emph{J.\ Math.\ Biol.}\ 70:549--590, 2015.

\bibitem{jones:stuart:95} D.~Jones, A.~Stuart.
	Attractive invariant manifolds under approximation. Inertial manifolds.
	\emph{J.\ Differ.\ Equations} 123, 588--637, 1995.

\bibitem{keller:poetzsche:06} S.~Keller, C.~P{\"o}tzsche. 
	Integral manifolds under explicit variable time-step discretization. 
	\emph{J.\ Difference Equ.\ Appl.}\ 12(3--4), 321--342, 2005. 

\bibitem{lutscher:19} F.~Lutscher. 
	\emph{Integrodifference Equations in Spatial Ecology}, 
	Interdisciplinary Applied Mathematics 49. Springer, Cham, 2019.

\bibitem{martin:76} R.H.\ Martin, 
	\emph{Nonlinear operators and differential equations in Banach spaces}, 
	Pure and Applied Mathematics 11, John Wiley \& Sons, Chichester etc., 1976.

\bibitem{matkowski:09} J.~Matkowski. 
	Uniformly continuous superposition operators in the Banach space of Hölder functions, 
	\emph{J.\ Math.\ Anal.\ Appl.}\ 359(1) (2009), 56--61. 

\bibitem{poetzsche:03} C.~P{\"o}tzsche. 
	Extended hierarchies of invariant fiber bundles for dynamic equations on measure chains. 
	\emph{Differential Equations and Dynamical Systems} 18(1--2), 105--133, 2010. 

\bibitem{poetzsche:10} C.~P{\"o}tzsche.
	\emph{Geometric Theory of Discrete Nonautonomous Dynamical Systems}, 
	Lect.\ Notes Math.\ 2002, Springer, Berlin etc., 2010.

\bibitem{poetzsche:20a} C.~P{\"o}tzsche. 
	Numerical dynamics of integrodifference equations: Periodic solutions and invariant manifolds in $C^\alpha(\Omega)$, submitted 2021.

\bibitem{precup:02} R.~Precup. 
	\emph{Methods in Nonlinear Integral Equations}. 
	Springer, Dordrecht, 2002. 

\bibitem{quarteroni:valli:94} A.~Quarteroni, A.~Valli. 
	\emph{Numerical Approximation of Partial Differential Equations}, 
	Series in Computational Mathematics 23, Springer, Berlin etc., 1994.

\bibitem{russ:16} E.~Ru{\ss}.
	Dichotomy spectrum for difference equations in Banach spaces.
	\emph{J.\ Difference Equ.\ Appl.}\ 23(3), 576--617, 2016. 

\bibitem{stuart:humphries:98} A.~Stuart, A.~Humphries.
	\emph{Dynamical Systems and Numerical Analysis}, 
	Monographs on Applied and Computational Mathematics 2, 
	University Press, Cambridge, 1998.


\end{thebibliography}
\end{document}